\newtheorem{theorem}{Theorem}[section]
\newtheorem{definition}[theorem]{Definition}
\newtheorem{property}[theorem]{Property}
\newtheorem{lemma}[theorem]{Lemma}
\newcommand{\F}{\mathbb{F}}
\newcommand{\Ker}{\text{\rm Ker}}
\newcommand{\Fib}{\text{\rm Fib}}
\newcommand{\w}{\omega}
\newcommand{\of}{\mathcal{D}}
\newcommand{\PP}{{\mathcal{P}}}
\begin{document}

\centerline{\textbf{\LARGE{The Number of Fractional Powers in the Fibonacci Word}}}

\vspace{0.2cm}

\centerline{Huang Yuke\footnote[1]{School of Science, Beijing University of Posts and Telecommunications, Beijing, 100876, P. R. China. E-mail address: hyk@bupt.edu.cn, hyg03ster@163.com (Corresponding author).}
~~Wen Zhiying\footnote[2]{Department of Mathematical Sciences, Tsinghua University, Beijing, 100084, P. R. China. E-mail address: wenzy@tsinghua.edu.cn.}}

\vspace{1cm}

\centerline{\large{ABSTRACT}}

\vspace{0.2cm}

The Fibonacci word is
the fixed point beginning with the letter $a$ of morphism $\sigma(a)=ab$, $\sigma(b)=a$
defined over the alphabet $\{a,b\}$.
In this paper, we get explicit expression of
the number of distinct fractional powers in each factor of the Fibonacci word.

\vspace{0.2cm}

\noindent\textbf{Key words}~~~~the Fibonacci word; repetition; factor spectrum; the return word sequence.

\vspace{0.2cm}

\noindent\textbf{2010 MR Subject Classification}~~~~68R15

\section{Introduction}

Let $\mathcal{A}$ be a finite set called alphabet.
Let $\w=x_1x_2\cdots x_N$ be a finite word over $\mathcal{A}$.
We denote by $|\w|$ (resp. $|\w|_\alpha$, $\#\w$) the number of letters (resp. letter $\alpha$, distinct letters) in $\w$.
For $1\leq i\leq j\leq N$,
we define $\w[i,j]=x_ix_{i+1}\cdots x_{j-1}x_j$ or $[x_i,x_{i+1},\ldots,x_{j-1},x_j]$£¬
by convention, $\w[i]=\w[i,i]=x_i$ and $\w[i,i-1]=\varepsilon$ (empty word).
We call $\w[i,j]$ (resp. $\w[1,j]$, $\w[i,N]$) a factor (resp. prefix, suffix) of $\w$, denoted by $\w[i,j]\prec\w$ (resp. $\w[1,j]\triangleleft\w$, $\w[i,N]\triangleright\w$).
For any $i\geq1$ and $i+n-1\leq N$, we define $\w[i;n]=\w[i,i+n-1]$.
The position of factor $\w[i,j]$ in $\w$ is defined by $i$.
An infinite word is said to be recurrent if every factor occurs infinitely often \cite{AS2003}.
Let $\w$ be a factor of a recurrent infinite word.
We denote the position of the $p$-th occurrence of $\w$ by $(\w)_p$. When $|\w|=1$, we denote $(\w)_p$ by $\w_p$ for short.
Let $\w$ and $\nu$ be two words. $\w\cdot\nu$ is the concatenation of $\w$ and $\nu$.
Let $S$ be a set. We denote $\# S$ the number of elements in $S$.
For a real number $r$, the floor function $\lfloor r\rfloor$ is the greatest integer less than or equal to $r$, and the ceiling function $\lceil r\rceil$ is the least integer greater than or equal to $r$.


The Fibonacci morphism $\sigma$ over alphabet $\{a,b\}$ is a substitution defined by $\sigma(a)=ab$ and $\sigma(b)=a$.
The Fibonacci word $\F$ is defined to be
the fixed point beginning with the letter $a$ of the Fibonacci morphism.
It is a recurrent infinite word \cite{AS2003}.
Define $F_m=\sigma^m(a)$ for $m\geq0$, by convention,
$F_{-1}=b$ and $F_{-2}=\varepsilon$.
The $m$-th Fibonacci number $f_m$ is equal to $|F_m|$.
As a classical example of sequences over the binary alphabet,
the Fibonacci word
$\F$ has many remarkable properties. We refer to Lothaire \cite{L1983,L2002}, Allouche-Shallit \cite{AS2003} and Berstel \cite{B1996}.

Let $R_a$ and $R_b$ be two words over alphabet $\mathcal{A}$.
The Fibonacci word over alphabet $\{R_a,R_b\}$ is denoted by $\F(R_a,R_b)$.
In this paper, we always assume that there exists an integer $m\geq0$ such that $|R_a|=f_m$ and $|R_b|=f_{m-1}$.
In particular, when $\mathcal{A}=\{a,b\}$, $R_a=a$ and $R_b=b$, we have $\F(a,b)=\F$.
We also regard $\F(R_a,R_b)$ still a sequence over alphabet $\mathcal{A}$ if no confusion happens. In this case we use $\F(R_a,R_b)[i]$ as
the $i$-th letter in the sequence. For instance, let $\mathcal{A}=\{A,B,C,D,E\}$, $R_a=ABC$ and $R_b=DE$, then
$\F(R_a,R_b)=R_aR_bR_aR_aR_b\cdots=ABC\cdot DE\cdot ABC\cdot ABC\cdot DE\cdots$.
And $\F(R_a,R_b)[5]=E$.

By Zeckendorf numeration system \cite{Z1972}, every positive integer $n$ can be written uniquely as $n=\sum_{j\geq0}a_jf_j$ with $a_j\in\{0,1\}$ and $a_{j+1}\times a_j=0$ for $j\geq0$. If $a_k=1$ and $a_j=0$ for all $j>k$,
we call $\Fib(n)=a_ka_{k-1}\ldots a_1a_0$
the canonical Fibonacci representation of $n$.
Let $a_ma_{m-1}\cdots a_1a_0$ be a word
over alphabet $\{0,1\}$, by \cite{DMSS2015-1}, the word can be written as a number $[a_ma_{m-1}\cdots a_1a_0]_F=\sum^m_{j\geq0}a_jf_j$,
even if $a_m=0$ or $a_{j+1}\times a_j\neq0$ for some $j\geq0$.


Let $u=x_1x_2\cdots x_N$ be a finite word over alphabet $\mathbb{Z}$ (the set of integers).
We denote $\Sigma u=x_1+x_2+\cdots +x_N$. If an infinite word $\rho$ satisfies that $\rho[i]=0$ for all $i\geq1$, we call it a zero sequence. Moreover,
$[\underbrace{n\longrightarrow}_{N}]:=[\underbrace{n,n,\ldots,n}_{N}]$,
$[\underbrace{n\nearrow}_{N}]:=[\underbrace{n,n+1,\ldots,n+N-1}_{N}]$ and
$[\underbrace{n\searrow}_{N}]:=[\underbrace{n,n-1,\ldots,n-N+1}_{N}]$.


The \emph{fractional power} is a topic dealing with repetitions in words.
We say a (finite or infinite) word $\w$ contains a $r$-power (real $r>1$) if $\w$ has a factor of the form $x^{\lfloor r\rfloor}x'$ where $x'$ is a prefix of $x$ and $|x^{\lfloor r\rfloor}x'|\geq r|x|$ see \cite{AS2003}. In this case, we call $x^{\lfloor r\rfloor}x'$ a $r$-power with \emph{size} $|x|$.
For instance, taking $x=ab$, then $\F[4,8]=ababa$ is a $\frac{5}{2}$-power of size $|ab|=2$ in $\F$.
Obviously the notion $r$-power is a generalization of square (2-power) and cube (3-power).

The study of power of a word has a long history. There are many significant contributions, for example \cite{DL2003,DMSS2016-3,FS1999,FS2014,G2006,IMS1997,MS2014,S2010,TW2007,WW1994}. 
In particular, Iliopoulos-Moore-Smyth \cite{IMS1997} computed the positions of all squares in $\F$.
Fraenkel-Simpson \cite{FS1999,FS2014} obtained the number of 
squares in $F_m=\sigma^m(a)$.
Du-Mousavi-Schaeffer-Shallit \cite{DMSS2016-3} obtained the numbers of repeated squares and cubes
in $\F[1,n]$ for all $n\geq1$. All their numerations start from the first letter of the sequence.

In this paper, we count the number of distinct $r$-powers in
$\mathbb{F}[i;n]$ for all $i,n\geq1$ and $r\geq2$, denoted by $\mathrm{D}(r,i,n)$.
Our numeration can start from any letter of the sequence, comparing starting from the first letter, there are some difficulties. To overcome these difficulties,
we develop some new techniques by using the structure of return word sequences introduced in \cite{HW2015-1,HW2015-2,HW2016-3}.
To get the general numeration of distinct $r$-powers, using again the structure of return word sequences, we establish a simple correspondent relation between the positions of squares and $r$-powers.

Our method is based on \emph{factor spectrum}, \emph{return word sequences} and \emph{kernel words}, established in Huang-Wen \cite{HW2015-1,HW2015-2,HW2016-3}.
This method can be used to study some other topics in combinatorics on words. For instance, using similar method, we can count the number of palindromes
in $\rho[i;n]$. Here $\rho$ is a substitution sequence and $i,n\geq1$.

We give our main idea and some simple examples in Section 2.
Then we list main results in Section 3. Sections 4, 5 and 7 will be devoted to the details of the proofs. We give some properties of function $D(2,i,n)$ in Section 6.

\section{Main idea and some simple examples}

We first recall \emph{factor spectrum} introduced in Huang-Wen \cite{HW2015-1,HW2015-2}.

Let $\rho$ be an infinite word and $\Omega$ be the set of all factors in $\rho$. A factor $\w\in\Omega$ may occur many times (even infinity many times) at different
positions in $\rho$. We note $(\w,j)$ the factor $\w$ which occurs at the position $j$. As we have mentioned before, any factor of the Fibonacci word will appear infinity many times.
Let $\cal P$ be a property. Our aim is to find out all factors which are located at different positions in $\rho$ satisfying the property.

\begin{definition}[Factor Spectrum]
Let $\cal P$ be a property, the factor spectrum with respect to $\PP$ is defined by
$$\mathcal{S}_{\PP}=\{(\w,j)\mid \rho[j;|w|]=\w\text{ and }(\w,j)\text{ satisfies the property }\PP\}.$$
Notice that $\mathcal{S}_{\PP}$ 
is a subset of $\Omega\times\mathbb{Z}^{+}$.
And it maybe a finite or infinite set according to different property $\PP$.
\end{definition}

We notice that factor spectrum with respect to $\PP$ can be regarded a function of two variables, factor variable $\w$ and position variable $j$.
%
Factor spectrum plays an important role in the paper.
By factor spectrum, we get not only all positions of some fixed squares, but also all positions starting a square.
Let us start from a simple question.

\medskip

\textbf{Q1. How many repeated factor $aa$ in $\F[i;n]$ for $i,n\geq1$?}

Let us consider factor spectrum
$\mathcal{S}_{\PP_1}=\{(aa,j)\mid \F[j;2]=aa\}$.
It shows all positions of factor $aa$ in $\F$.
A powerful tool to determine $\mathcal{S}_{\PP_1}$ is the structure of the \emph{return word sequence} which is introduced and studied in Huang-Wen \cite{HW2015-1,HW2015-2},
see Property \ref{G} and Figure \ref{Fig:7} below.
The definition of \emph{return word} is from Durand \cite{D1998}.
Let $\w$ be a factor of $\F$.
The \emph{$p$-th return word} of $\w$ is $R_{p}(\w):=\F[(\w)_p,(\w)_{p+1}-1]$ for $p\geq1$.
The sequence $\{R_p(\w)\}_{p\geq1}$ is called the return word sequence of the factor $\w$.

\begin{property}[Theorem 2.11 in \cite{HW2015-1}]\label{G}\
For each factor $\w\prec\F$,
the return word sequence $\{R_p(\w)\}_{p\geq1}$ is itself still a Fibonacci word over the alphabet $\{R_1(\w),R_2(\w)\}$.
\end{property}

\begin{figure}[!ht]
\centering
\setlength{\unitlength}{0.83mm}
\begin{picture}(180,10)
\put(10,6){$\F[1,31]$}
\put(25,6){$=$}
\put(30,6){${\begin{array}{*{32}{p{0.3cm}}}
a&b&\textbf{a}&\textbf{a}&b&a&b&\textbf{a}&\textbf{a}&b&\textbf{a}&\textbf{a}
&b&a&b&\textbf{a}&\textbf{a}&b&a&b&\textbf{a}&\textbf{a}
&b&\textbf{a}&\textbf{a}&b&a&b&\textbf{a}&\textbf{a}&b\end{array}}$}
\put(30,4){\line(1,0){8}}
\put(38,5){\line(1,0){20}}
\put(58,4){\line(1,0){13}}
\put(71,5){\line(1,0){20}}
\put(91,4){\line(1,0){20}}
\put(111,5){\line(1,0){13}}
\put(124,4){\line(1,0){20}}
\put(30,0){$R_0$}
\put(38,0){A}
\put(59,0){B}
\put(71,0){A}
\put(91,0){A}
\put(112,0){B}
\put(124,0){A}
\end{picture}
\caption{The return word sequence $\{R_p(aa)\}_{p\geq1}$ is a Fibonacci word over the alphabet $\{A,B\}:=\{R_1(aa),R_2(aa)\}=\{aabab,aab\}$ with prefix $R_0=ab$.\label{Fig:7}}
\end{figure}

Obviously, the return word sequence gives a decomposition of $\F$.
By Property \ref{G}, we have $$(aa)_p=|R_0|+|\F[1,p-1|_a\times|R_1(aa)|+|\F[1,p-1|_b\times|R_2(aa)|+1.$$
Thus $\{(aa)_p\mid p\geq1\}=\{3,8,11,16,21,24,\ldots\}$.
Define a sequence $P_{aa}=\{P_{aa}[j]\}_{j\geq1}$ as below, called the
\emph{position sequence} of factor $aa$.
\begin{equation*}
P_{aa}[j]=\begin{cases}
1,&\text{if }j\in\{(aa)_p\mid p\geq1\},\\
0,&\text{otherwise}.
\end{cases}
\end{equation*}
Then $P_{aa}=001000010\ldots$, see the second line in Figure \ref{Fig:8}.
Moreover the number of repeated $aa$ in $\F[i;n]$ is $\Sigma P_{aa}[i;n-|aa|+1]$.

\medskip

\noindent\textbf{Remark 1.} For any factor $\w\prec\F$, $\w\prec\F[i;n]$ is equivalent to $\Sigma P_{\w}[i;n-|\w|+1]\geq1$. Now we consider that how many distinct squares in $\F[i;n]$ for $i,n\geq1$?
A direct method is that: we find out all squares in $\F$ denoted by set $\mathcal{S}$, and count $\sum_{\w\prec\mathcal{S}}\min\{1,\Sigma P_{\w}[i;n-|\w|+1]\}$.
But unfortunately, the set $\mathcal{S}$ is infinity.
The counting will be complicated and difficult to find
an explicit expression.

\noindent\textbf{Remark 2.} A known result is that all squares in $\F$ are of length $2f_m$ for $m\geq0$, see \cite{WW1994} for instance.
Thus in order to count the number of squares in $\F[i;n]$, we only need to
count the number of squares of length $2f_m$ in $\F[i;n]$ for each $m\geq0$ first.

\medskip

\textbf{Q2. How many squares of length $2f_m$ in $\F[i;n]$ for $i,n\geq1$?}

We consider factor spectrum
$\mathcal{S}_{\PP_2}=\{(\w,j)\mid \F[j;2f_m]=\w\w\}$, i.e.,
the set of all squares of length $2f_m$ with their positions in $\F$.
By Wen-Wen \cite{WW1994},
the number of distinct squares of length $2f_m$ in $\F$ is exact $f_m$ for $m\geq0$.
We will use a tool called \emph{kernel word} introduced and studied in \cite{HW2015-1, HW2015-2, HW2016-3} (we call them singular word in \cite{WW1994}).
By this way, we can divided all squares of length $2f_m$ into two types (for details see Subsection \ref{PS}.1), and then get the counting expressions in Section \ref{SecB}. 

The $m$-th kernel word $K_m$ is defined by
$K_m=F_{m+1}[f_{m+1}]F_m[1,f_{m}-1]$ for $m\geq-1$, by convention, $K_{-2}=\varepsilon$.
Let $\Ker(\w)$ be the maximal kernel word occurring in the factor $\w$,
called the \emph{kernel} of $\w$. By Theorem~1.9 in \cite{HW2015-1}, $\Ker(\w)$ occurs in $\w$ uniquely and only once.
In Subsection \ref{PS}.1, we prove that $\{\ \Ker(\w^2)\mid |\w|=f_m,~\w^2\prec\F\}=\{K_{m+1},K_{m-1}\}$ for $m\geq2$, that is, only two kernel words.
Thus all squares of length $2f_m$ can be divided into two types according to their kernels.
By Properties \ref{P} and \ref{wp} below, we can determine the positions of squares in each type by the positions of their kernels.

\begin{property}[Property 4.1 in \cite{HW2016-3}]\label{P}\
For $m\geq-1$, $p\geq1$ and $\phi=\frac{\sqrt{5}-1}{2}$,
the position of the $p$-th occurrence of $K_m$, denoted by $(K_m)_p$, is equal to
$(K_m)_p=pf_{m+1}+\lfloor p\phi \rfloor f_{m}.$
In particular,
$a_p=p+\lfloor p\phi \rfloor$,
$b_p=2p+\lfloor p\phi \rfloor$ for $p\geq1$, and $(K_m)_1=f_{m+1}$ for $m\geq-1$.
\end{property}

\begin{property}[Theorem 2.8 in \cite{HW2015-1}]\label{wp}\
Let $\w\prec \F$ be a factor. For all $p\geq 1$, the difference
$$(\Ker(\w))_p-(\w)_p=(\Ker(\w))_1-(\w)_1$$
is independent of $p$.
\end{property}

Take $m=2$ for instance.
There are three different squares of length $2f_2=6$ in $\F$: $abaaba=(aba)^2$, $baabaa=(baa)^2$ and $aabaab=(aab)^2$.
We define $\mathrm{code}(aba)=1$, $\mathrm{code}(baa)=2$ and $\mathrm{code}(aab)=3$.
Define the position sequence $\of=\{\of[j]\}_{j\geq1}$ as below.
\begin{equation*}
\of[j]=\begin{cases}
\mathrm{code}(\F[j;3]),&\text{if }\F[j;3]=\F[j+3;3],\\
0,&\text{if }\F[j;3]\neq\F[j+3;3].
\end{cases}
\end{equation*}
Obviously, it shows the positions of all squares of length 6, see the 4-th line in Figure \ref{Fig:8}.
Thus the number of distinct squares of length 6 in $\F[i;n]$ is equal to the number of distinct letters in $\of[i;n-2f_2+1]$. It is equal to $\#\of[i;n-5]$.


Now we have two position sequences: $P_{\w}$ and $\of$.
$P_{\w}$ can be determined by the property of return word sequence.
Using the method below, we can determine $\of$ by $P_{\w}$.
By the definition of kernel,
$\Ker(abaaba)=aa$ and $\Ker(baabaa)=\Ker(aabaab)=aabaa$.
According to this, we split $\of$ into ${}^1\of+{}^2\of$, see lines 4 to 6 in Figure \ref{Fig:8}.
Here ${}^1\of$ and ${}^2\of$ are the position sequences of $abaaba$ and $\{baabaa,aabaab\}$ respectively.
By Property \ref{wp}, the difference $(aa)_p-(abaaba)_p=2$.
So the relation between ${}^1\of$ and $P_{aa}$ is
${}^1\of[j]=P_{aa}[j+2]\times \mathrm{code}(aba)$, see lines 2 and 5 in Figure \ref{Fig:8}. Similarly,
${}^2\of[j]=P_{aabaa}[j+1]\times\mathrm{code}(baa)+P_{aabaa}[j]\times\mathrm{code}(aab)$, see lines 3 and 6 in Figure \ref{Fig:8}.

\begin{figure}[!ht]
\centering
\setlength{\unitlength}{0.83mm}
\begin{picture}(205,70)
\small
\linethickness{1pt}
\put(10,66){$\F[1,31]$}
\put(25,66){$=$}
\put(30,66){${\begin{array}{*{32}{p{0.3cm}}}
a&b&a&a&b&a&b&a&a&b&a&a&b&a&b&a&a&b&a&b&a& a&b&a&a&b&a&b&a&a&b\end{array}}$}
\put(37.5,69){\line(1,0){8}}
\put(58,69){\line(1,0){8}}
\put(70,69){\line(1,0){8}}
\put(91,69){\line(1,0){8}}
\put(111,69){\line(1,0){8}}
\put(123,69){\line(1,0){8}}
\put(58,65){\line(1,0){20}}
\put(111,65){\line(1,0){20}}
\put(0.8,55){$P_{aabaa}[1,26]$}
\put(25,55){$=$}
\put(30,55){${\begin{array}{*{27}{p{0.3cm}}}
0&0&0&0&0&0&0&1&0&0&0&0&0&0&0&0&0&0&0&0&1&0&0&0&0&0\end{array}}$}
\put(6,60){$P_{aa}[1,26]$}
\put(25,60){$=$}
\put(30,60){${\begin{array}{*{27}{p{0.3cm}}}
0&0&1&0&0&0&0&1&0&0&1&0&0&0&0&1&0&0&0&0&1&0&0&1&0&0\end{array}}$}
\put(142,61.3){\vector(-1,0){4}}
\put(144,60){The position sequence of $aa$}
\put(142,56.3){\vector(-1,0){4}}
\put(144,55){The position sequence of $aabaa$}
\put(9.2,45){$\of[1,26]$}
\put(25,45){=}
\put(30,45){${\begin{array}{*{27}{p{0.3cm}}}
1&0&0&0&0&1&2&3&1&0&0&0&0&1&0&0&0&0&1&2&3&1&0&0&0&0\end{array}}$}
\put(10,40.5){$||$}
\put(7.4,35){${}^1\of[1,26]$}
\put(25,35){=}
\put(30,35){${\begin{array}{*{27}{p{0.3cm}}}
1&0&0&0&0&1&0&0&1&0&0&0&0&1&0&0&0&0&1&0&0&1&0&0&0&0\end{array}}$}
\put(151.4,35){$\Ker(abaaba)=aa$}
\put(148,36.3){\vector(-1,0){10}}
\put(9.5,30.5){+}
\put(7.4,25){${}^2\of[1,26]$}
\put(25,25){=}
\put(30,25){${\begin{array}{*{27}{p{0.3cm}}}
0&0&0&0&0&0&2&3&0&0&0&0&0&0&0&0&0&0&0&2&3&0&0&0&0&0\end{array}}$}
\put(148,25){$\begin{cases}
\Ker(baabaa)=aabaa\\
\Ker(aabaab)=aabaa
\end{cases}$}
\put(146,26.3){\vector(-1,0){8}}
%
\linethickness{0.2pt}
\put(29.5,19){\line(0,1){45}}
\put(50,19){\line(0,1){45}}
\put(62.5,19){\line(0,1){45}}
\put(83,19){\line(0,1){45}}
\put(103.5,19){\line(0,1){45}}
\put(115.5,19){\line(0,1){45}}
\put(135.5,19){\line(0,1){45}}
\end{picture}
\vspace{-2.4cm}
\caption{The relations among $\F$ and the position sequences $P_{aa}$, $P_{aabaa}$, $\of$.
\label{Fig:8}}
\end{figure}

\textbf{Q3. How many $r$-powers of size $f_m$ in $\F[i;n]$ for $i,n\geq1$ and $r\geq2$?}

Taking $m=2$ for example, the key step is to determine the position sequence of $r$-powers of size $f_m=3$, denoted by $\of^r$, where $r=2+\frac{h}{3}$, $h=1,2,\ldots$.
Then the number of distinct $r$-powers of size $f_2$ in $\F[i;n]$ is $\#\of^r[i;n-rf_2+1]$.

Using the method below, we can determine $\of^r$ by $\of$ (the position sequence of squares of size 3).
Notice that, if $\of[j]\times\of[j+1]\neq0$, both $\F[j;6]$ and $\F[j+1;6]$ are squares. So $\F[j;7]$ is a $(2+\frac{1}{3})$-power.
This means $\of^{2+\frac{1}{3}}[j]\neq0$.
Similarly, if $\of[j]\times\of[j+1]=0$, then $\of^{2+\frac{1}{3}}[j]=0$, see the first two lines in Figure \ref{Fig:9}.
By an analogous argument, we can determine sequence $\of^{2+\frac{h+1}{3}}$ by $\of^{2+\frac{h}{3}}$ for $h=1,2,\ldots$, see lines 2 to 4 in Figure \ref{Fig:9}.
\begin{figure}[!ht]
\centering
\setlength{\unitlength}{0.83mm}
\begin{picture}(180,21)
\put(9.2,18){$\of[1,26]$}
\put(25,18){=}
\put(30,18){${\begin{array}{*{27}{p{0.3cm}}}
1&0&0&0&0&1&2&3&1&0&0&0&0&1&0&0&0&0&1&2&3&1&0&0&0&0\end{array}}$}
\put(7.3,12){$\of^{r}[1,26]$}
\put(25,12){=}
\put(30,12){${\begin{array}{*{27}{p{0.3cm}}}
0&0&0&0&0&1&2&3&0&0&0&0&0&0&0&0&0&0&1&2&3&0&0&0&0&0\end{array}}$ for $r=2+\frac{1}{3}$}
\put(7.3,6){$\of^{r}[1,26]$}
\put(25,6){=}
\put(30,6){${\begin{array}{*{27}{p{0.3cm}}}
0&0&0&0&0&1&2&0&0&0&0&0&0&0&0&0&0&0&1&2&0&0&0&0&0&0\end{array}}$ for $r=2+\frac{2}{3}$}
\put(7.3,0){$\of^{r}[1,26]$}
\put(25,0){=}
\put(30,0){${\begin{array}{*{27}{p{0.3cm}}}
0&0&0&0&0&1&0&0&0&0&0&0&0&0&0&0&0&0&1&0&0&0&0&0&0&0\end{array}}$ for $r=3$}
\end{picture}
\caption{The relations among sequences $\of$ and $\of^{r}$ for $r=2+\frac{h}{3}$, $h=1,2,3$.\label{Fig:9}}
\end{figure}

\textbf{Q4. The number of positions starting a square.}

Harju-K$\ddot{a}$rki-Nowotka \cite{HKN2011} considered the number of positions that do not start a square in a binary word. As an application of factor spectrum, we prove that all positions in the Fibonacci word start a square of length $2f_m$ for some $0\leq m\leq3$, see Property \ref{P5.2}.
Moreover, all positions in the Fibonacci word start infinite distinct squares, see Property \ref{P5.2.1}.

\section{Main results}\label{Sec.results}

In this paper, we count the number of distinct $r$-powers in
$\mathbb{F}[i;n]$ for all $i,n\geq1$ and $r\geq2$, denoted by $\mathrm{D}(r,i,n)$.
We give precise results for $r\in\{2,2+\epsilon,3\}$ in this paper, where $\epsilon$ is a small positive number.
In this section, we show the results and some examples for $r=2$.
The results for $r\in\{2+\epsilon,3\}$ are given in Section \ref{Sec-r}.

\begin{theorem}[$\mathrm{D}(2,1,n)$]\label{P2.1}\
For $n\geq3$, let $h\geq0$ such that $2f_h\leq n<2f_{h+1}$.
Then the number of distinct squares in $\F[1,n]$ is equal to
$\mathrm{D}(2,1,n)=\min\{n-f_{h-1}-2,f_{h+1}+f_{h-1}-3\}$.
\end{theorem}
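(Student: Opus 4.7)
The plan is to decompose the count by square length and then enumerate first occurrences using the kernel framework of Subsection~2.1. Every square in $\F$ has length $2f_m$ for some $m\geq 0$ (Remark~2), and since $n<2f_{h+1}$ only squares of length $2f_m$ with $m\leq h$ can fit into $\F[1,n]$. Hence I would write
\begin{equation*}
\mathrm{D}(2,1,n)=\sum_{m=0}^{h}N_m(n),
\end{equation*}
where $N_m(n)$ is the number of distinct length-$2f_m$ squares in $\F[1,n]$. By Wen-Wen there are exactly $f_m$ such squares in $\F$, and by Subsection~2.1 each one has kernel either $K_{m-1}$ or $K_{m+1}$. Property~\ref{wp} supplies a constant offset $\delta_{\omega^2}:=(\Ker(\omega^2))_1-(\omega^2)_1$ for each square, and Property~\ref{P} gives $(K_{m-1})_1=f_m$ and $(K_{m+1})_1=f_{m+2}$. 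Combining these yields the explicit first occurrence $(\omega^2)_1$ of every square, after which the membership condition becomes $(\omega^2)_1+2f_m-1\leq n$.

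With the first-occurrence data in hand, I would split the sum into a bulk part $m\leq h-2$ and a boundary part $m\in\{h-1,h\}$. For $m\leq h-2$, the explicit formulas should give the uniform bound $(\omega^2)_1+2f_m-1\leq 2f_h-1$ for all $f_m$ squares, so $N_m(n)=f_m$ and the bulk contribution equals
\begin{equation*}
\sum_{m=0}^{h-2}f_m=f_h-2.
\end{equation*}
For the boundary part, direct computation should show $N_{h-1}(2f_h)+N_h(2f_h)=f_{h-2}$, and the remaining $2f_{h-1}-1-f_{h-2}$ first occurrences end at the integer positions $2f_h+1,\,2f_h+2,\,\ldots,\,f_{h+1}+2f_{h-1}-1$, exactly one per integer. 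This would give
\begin{equation*}
N_{h-1}(n)+N_h(n)=\min\{n-2f_h+f_{h-2},\,2f_{h-1}-1\},
\end{equation*}
and adding the bulk term and using $f_{h+1}=f_h+f_{h-1}$ produces the claimed formula $\mathrm{D}(2,1,n)=\min\{n-f_{h-1}-2,\,f_{h+1}+f_{h-1}-3\}$.

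The hard part is the boundary no-collision/no-gap claim: that the first occurrences of the length-$2f_{h-1}$ and length-$2f_h$ squares (coming from the four kernel classes $K_{h-2},K_h,K_{h-1},K_{h+1}$) fill the integer window $[2f_h+1,\,f_{h+1}+2f_{h-1}-1]$ exactly one per integer. This forces a careful ordering of the first occurrences, controlled by the $\lfloor p\phi\rfloor$ terms in Property~\ref{P} and the precise offsets from Property~\ref{wp}, and the bookkeeping is where the hidden work lies. The small cases $h\leq 1$, in which the bulk range is empty, are handled by direct inspection.
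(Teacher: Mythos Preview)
Your approach is correct but genuinely different from the paper's. The paper does not prove Theorem~\ref{P2.1} directly; instead it first establishes the full two-variable formula of Theorem~\ref{T2.1} via the position-sequence machinery (Theorem~\ref{P2.5} and Figure~\ref{Fig:1}(B)) and then specializes to $i=1$ by reading off the first entry $R_a[1]$ in each of the five ranges. Your route bypasses all of that: you count first occurrences directly from Equation~(\ref{E2.2}) and Property~\ref{P} at $p=1$, which is considerably more elementary for this particular statement. What the paper's route buys is uniformity---the same machinery yields $\mathrm{D}(2,i,n)$ for every $i$---whereas your argument is tailored to $i=1$ and would not extend.

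One comment on your ``hard part'': you overestimate the difficulty. Since only \emph{first} occurrences matter, Property~\ref{P} is invoked only at $p=1$, where $(K_m)_1=f_{m+1}$ and the $\lfloor p\phi\rfloor$ terms never appear. With the explicit sets $K^1_{m-1,1}$ and $K^2_{m+1,1}$ from Equation~(\ref{E2.2}), the last positions of the boundary squares are just
\[
\underbrace{2f_h,\,2f_h+1,\,\ldots,\,2f_h+f_{h-1}-2}_{m=h,\ \text{kernel }K_{h-1}}\quad\text{followed by}\quad
\underbrace{2f_h+f_{h-1}-1,\,\ldots,\,f_{h+1}+2f_{h-1}-1}_{m=h-1,\ \text{kernel }K_{h}},
\]
which are visibly consecutive integers with no gap and no collision; the class with kernel $K_{h+1}$ contributes nothing because its earliest end position $f_{h+1}+f_{h-1}+2f_h-1\geq 2f_{h+1}>n$. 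So the ``careful ordering controlled by $\lfloor p\phi\rfloor$'' you anticipate is not needed---the bookkeeping is a two-line computation.
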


\noindent\textbf{Remark.}
Let $n=f_{h+2}$ for $h\geq0$. Since $2f_h\leq n<2f_{h+1}$ and $n-f_{h-1}-2\leq f_{h+1}+f_{h-1}-3$, $\mathrm{D}(2,1,n)=n-f_{h-1}-2=2f_{h}-2$.
We obtain thus the result of Fraenkel-Simpson \cite{FS1999}.

\begin{theorem}[$\mathrm{D}(2,i,n)$]\label{T2.1}\
For $i\geq1$ and $n\geq2$, let $h\geq0$ such that  $2f_h\leq n<2f_{h+1}$.
Then the number of distinct squares in $\F[i;n]$ is equal to
\begin{equation}\label{E3.5}
\mathrm{D}(2,i,n)=\begin{cases}
[0,n-2,1,0,0][\hat{i}],&n=2,3,\\
[1,1,n-3,n-3,n-3,1,1,1][\hat{i}],&n=4,5.\\
[R_a,R_b][\hat{i}]+f_{h-1}-2,&n\geq6\text{ and }n\in \mathrm{I}\cup \mathrm{II}\cup \mathrm{III}\\
[R_a,R_b][\hat{j}]+f_{h-1}-2,&otherwise.
\end{cases}
\end{equation}
Here $\Fib(i-1)=a_ka_{k-1}\ldots a_1a_0$, $\hat{i}=[a_{h+1}a_{h}\ldots a_1a_0]_F+1$
and $\hat{j}=[a_{h+2}a_{h+1}\ldots a_1a_0]_F+1$.
The definitions of Ranges I to V see Figure \ref{Fig:1}(A). The expressions of $R_a$ and $R_b$ see Figure~\ref{Fig:1}(B).
\end{theorem}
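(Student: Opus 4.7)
The plan is to decompose $\mathrm{D}(2,i,n)$ by square length and then apply the factor-spectrum and kernel-word machinery from Sections 2 and 4. By Remark 2, every square in $\F$ has length $2f_m$ for some $m\geq 0$, so
$$\mathrm{D}(2,i,n) = \sum_{m=0}^{h} \mathrm{N}(m,i,n),$$
where $\mathrm{N}(m,i,n)$ is the number of distinct squares of length $2f_m$ in $\F[i;n]$; the sum stops at $h$ because $2f_{h+1}>n$. To compute each $\mathrm{N}(m,i,n)$ I would reuse the kernel decomposition sketched in Q2 of Section 2: for $m\geq 2$ the squares of length $2f_m$ split into at most two kernel classes indexed by $K_{m+1}$ and $K_{m-1}$, Property~\ref{wp} reduces the distinct-square count inside each class to a kernel-occurrence count in a shifted window, and Property~\ref{P} supplies these counts in closed form via $(K_m)_p = pf_{m+1}+\lfloor p\phi\rfloor f_m$.

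The next step is to split the total into a bulk part and a boundary part. For $m$ sufficiently small relative to $h$, any window of length $n\geq 2f_h$ already contains every one of the $f_m$ distinct squares of length $2f_m$ that occur in $\F$; this follows from Property~\ref{G}, which forces the return word sequence of each kernel to be uniformly recurrent with controlled gaps. A careful threshold analysis should place the cutoff at $m\leq h-3$, producing a bulk contribution
$$\sum_{m=0}^{h-3} f_m = f_{h-1}-2,$$
which matches the additive constant in the theorem. The remaining terms come from the three boundary indices $m\in\{h-2,h-1,h\}$, for which only a proper subset of the $f_m$ possible squares appears, and which subset depends on where $i$ sits inside the Fibonacci-structured arrangement of the relevant kernel occurrences.

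Because that arrangement is itself a Fibonacci word over $\{R_1(K_m),R_2(K_m)\}$ by Property~\ref{G}, the boundary contribution depends only on the last few Zeckendorf digits of $i-1$; this is exactly what $\hat{i}=[a_{h+1}a_h\cdots a_0]_F+1$ and $\hat{j}=[a_{h+2}a_{h+1}\cdots a_0]_F+1$ record. Packaging the three boundary values of $\mathrm{N}(m,i,n)$ into a single vector indexed by this truncated Zeckendorf code, then reorganising by Ranges I--V of Figure~\ref{Fig:1}(A), should produce the lookup arrays $[R_a,R_b]$; the switch between $\hat{i}$ and $\hat{j}$ reflects whether the largest square of length $2f_h$ is already captured within the lowest $h+2$ Zeckendorf digits or instead requires peeking one digit further.

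The hardest part will be this last case analysis: pinning down Ranges I--V rigorously, ruling out double counting between the two kernel classes $K_{m+1}$ and $K_{m-1}$ when both contribute squares inside $\F[i;n]$, and verifying that the correct Zeckendorf truncation is $\hat{i}$ on Ranges I, II, III and $\hat{j}$ elsewhere. The small-$n$ cases $n\in\{2,3\}$ and $n\in\{4,5\}$ correspond to $h=0$ and $h=1$ respectively, where the bulk-boundary split degenerates; I would handle them by direct enumeration using the position-sequence framework of Section 2, recovering the two exceptional vector formulas at the top of~(\ref{E3.5}).
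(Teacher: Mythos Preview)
Your proposal is correct and follows essentially the same route as the paper: decompose by square length, show the bulk $m\leq h-3$ contributes $\sum_{m=0}^{h-3}f_m=f_{h-1}-2$, and handle the three boundary indices $m\in\{h-2,h-1,h\}$ via the Fibonacci structure of the square positions to obtain a Zeckendorf-indexed lookup. The paper makes the Zeckendorf step precise through the position sequences $\mathcal{D}^{2,m}$ of Theorem~\ref{P2.5} together with Lemmas~\ref{L1}--\ref{L3}, which show that $\{\#\mathcal{D}^{2,m}[i;n-2f_m+1]\}_{i\geq1}$ is itself a Fibonacci word over blocks of length $f_{h+1},f_h$ or $f_{h+2},f_{h+1}$---exactly the $\hat{i}$ versus $\hat{j}$ dichotomy you anticipate---and, as you guess, the Range I--V case split is where the unavoidable bookkeeping lies.
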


\begin{figure}[!ht]
\centering
\scriptsize
\setlength{\unitlength}{0.82mm}
\begin{picture}(200,10)
\put(0,10){\textbf{(A) Divide $2f_h\leq n<2f_{h+1}$ into five ranges.}}
\put(0,5){Range I: $2f_h\leq n<2f_h+f_{h-3}-1$;}
\put(80,5){Range II: $2f_h+f_{h-3}-1\leq n<2f_h+f_{h-2}-1$;}
\put(0,0){Range III: $2f_h+f_{h-2}-1\leq n<f_{h+1}+2f_{h-1}$;}
\put(80,0){Range IV: $f_{h+1}+2f_{h-1}\leq n<3f_h$;}
\put(150,0){Range V: $3f_h\leq n<2f_{h+1}$.}
\end{picture}
\begin{picture}(200,105)
\put(0,100){\textbf{(B) The expressions of $R_a$ and $R_b$ in Theorem \ref{T2.1}.}}
\put(0,95){$n\in \mathrm{I}$:}
\put(13,95){$R_a=[\underbrace{n-2f_{h-1}\searrow}_{f_{h+1}+f_{h-1}-n-1},
\underbrace{2n-f_{h+1}-3f_{h-1}+1\longrightarrow}_{n-3f_{h-1}+1},
\underbrace{2n-f_{h+1}-3f_{h-1}+2\nearrow}_{f_{h+1}+f_{h-1}-n-2},
\underbrace{n-2f_{h-1}\longrightarrow}_{n-2f_{h}+2},
\underbrace{n-2f_{h-1}+1\nearrow}_{f_{h+1}+f_{h-1}-n-1}$,}
\put(22,85){$\underbrace{f_{h}\longrightarrow}_{n-2f_{h}+1},
\underbrace{f_{h}-1\searrow}_{f_{h-4}},
\underbrace{f_{h}-f_{h-4}-1\longrightarrow}_{f_{h+1}+f_{h-1}-n-1},
\underbrace{f_{h}-f_{h-4}\nearrow}_{f_{h-4}},
\underbrace{f_{h}\longrightarrow}_{n-2f_{h}+1},
\underbrace{f_{h}-1\searrow}_{f_{h+1}+f_{h-1}-n-1},
\underbrace{n-2f_{h-1}\longrightarrow}_{n-2f_{h}+1}]$;}
\put(13,75){$R_b=[\underbrace{n-2f_{h-1}\searrow}_{f_{h+1}+f_{h-1}-n-1},
\underbrace{2n-f_{h+1}-3f_{h-1}+1\longrightarrow}_{n-3f_{h-1}+1},
\underbrace{2n-f_{h+1}-3f_{h-1}+2\nearrow}_{f_{h+1}+f_{h-1}-n-2},
\underbrace{n-2f_{h-1}\longrightarrow}_{n-f_{h+1}+2}]$.}
\put(0,65){$n\in \mathrm{II}$:}
\put(13,65){$R_a=[\underbrace{n-2f_{h-1}\longrightarrow}_{f_{h+2}+f_{h-3}-n},
\underbrace{n-2f_{h-1}\searrow}_{n-f_{h+1}-f_{h-1}+1},
\underbrace{f_{h}\longrightarrow}_{f_{h-3}},
\underbrace{f_{h}-1\searrow}_{f_{h+1}+2f_{h-2}-n-2},
\underbrace{n-f_{h}-f_{h-2}+1\longrightarrow}_{n-f_{h+1}-f_{h-1}+3},$}
\put(22,55){$\underbrace{n-f_{h}-f_{h-2}+2\nearrow}_{f_{h+1}+2f_{h-2}-n-2},
\underbrace{f_{h}\longrightarrow}_{f_{h-3}},
\underbrace{f_{h}\nearrow}_{n-f_{h+1}-f_{h-1}+1},
\underbrace{n-2f_{h-1}\longrightarrow}_{f_{h-3}}];$}
\put(160,55){$R_b=[\underbrace{n-2f_{h-1}\longrightarrow}_{f_{h}}]$.}
\put(0,45){$n\in \mathrm{III}$:}
\put(13,45){$R_a=[\underbrace{n-2f_{h-1}\longrightarrow}_{f_{h+2}+f_{h-3}-n-1},
\underbrace{n-2f_{h-1}\searrow}_{n-f_{h+1}-f_{h-1}+1},
\underbrace{f_{h}\longrightarrow}_{3f_{h}-n-1},
\underbrace{f_{h}\nearrow}_{n-f_{h+1}-f_{h-1}+1},
\underbrace{n-2f_{h-1}\longrightarrow}_{f_{h-3}}]$;}
\put(160,45){$R_b=[\underbrace{n-2f_{h-1}\longrightarrow}_{f_{h}}]$;}
\put(0,35){$n\in \mathrm{IV}$:}
\put(13,35){$R_a=[\underbrace{f_{h+1}-1\searrow}_{f_{h-1}},
\underbrace{f_{h}\longrightarrow}_{f_{h+2}+f_{h-2}-n},
\underbrace{f_{h}+1\nearrow}_{n-f_{h+2}+f_{h-2}},
\underbrace{n-2f_{h-1}\longrightarrow}_{f_{h+1}+4f_{h-1}-n},
\underbrace{n-2f_{h-1}-1\searrow}_{n-2f_{h+1}+f_{h-2}}]$;}
\put(13,25){$R_b=[\underbrace{f_{h+1}-1\searrow}_{f_{h-1}},
\underbrace{f_{h}\longrightarrow}_{f_{h+2}+f_{h-2}-n},
\underbrace{f_{h}+1\nearrow}_{f_{h-1}-2},
\underbrace{f_{h+1}-1\longrightarrow}_{n-f_{h+2}+2}]$.}
\put(0,15){$n\in \mathrm{V}$:}
\put(13,15){$R_a=[\underbrace{f_{h+1}-1\searrow}_{f_{h+2}+f_{h}-n},
\underbrace{n-2f_{h}\longrightarrow}_{n-3f_{h}},
\underbrace{n-2f_{h}+1\nearrow}_{2f_{h-2}},
\underbrace{n-2f_{h-1}\longrightarrow}_{f_{h+2}+f_{h-1}-n},
\underbrace{n-2f_{h-1}-1\searrow}_{n-3f_{h}},
\underbrace{f_{h}+2f_{h-2}-1\longrightarrow}_{f_{h+2}+f_{h}-n-1},
\underbrace{f_{h}+2f_{h-2}\nearrow}_{n-3f_{h}+1},$}
\put(22,5){$\underbrace{n-2f_{h-1}\longrightarrow}_{f_{h+2}+f_{h-1}-n},
\underbrace{n-2f_{h-1}-1\searrow}_{n-2f_{h+1}+f_{h-2}}]$,}
\put(80,5){$R_b=[\underbrace{f_{h+1}-1\searrow}_{f_{h+2}+f_{h}-n},
\underbrace{n-2f_{h}\longrightarrow}_{n-3f_{h}},
\underbrace{n-2f_{h}+1\nearrow}_{f_{h+2}+f_{h}-n-2},
\underbrace{f_{h+1}-1\longrightarrow}_{n-f_{h+2}+2}]$.}
\end{picture}
\caption{The expressions of $R_a$ and $R_b$ in Theorem \ref{T2.1}.\label{Fig:1}}
\end{figure}

\noindent\textbf{Example.} We consider the number of distinct squares in $\F[333;20]$.
Since $n=20$, we have $h=4$. By the definitions of Ranges I to V in Figure \ref{Fig:1}(A), we have $n=20\in$ Range III=$\{18,19,\ldots,22\}$.
By the expressions of $R_a$ and $R_b$ in Figure~\ref{Fig:1}(B), for $n=20$ and $h=4$
\begin{equation*}
\begin{split}
R_a&=[\underbrace{n-2f_{h-1}\longrightarrow}_{f_{h+2}+f_{h-3}-n-1},
\underbrace{n-2f_{h-1}\searrow}_{n-f_{h+1}-f_{h-1}+1},
\underbrace{f_{h}\longrightarrow}_{3f_{h}-n-1},
\underbrace{f_{h}\nearrow}_{n-f_{h+1}-f_{h-1}+1},
\underbrace{n-2f_{h-1}\longrightarrow}_{f_{h-3}}]\\
&=[\underbrace{10\longrightarrow}_{2},
\underbrace{10\searrow}_{3},
\underbrace{8\longrightarrow}_{3},
\underbrace{8\nearrow}_{3},
\underbrace{10\longrightarrow}_{2}]
=[10,10,10,9,8,8,8,8,8,9,10,10,10];\\
R_b&=[\underbrace{n-2f_{h-1}\longrightarrow}_{f_{h}}]
=[\underbrace{10\longrightarrow}_{8}]=[10,10,10,10,10,10,10,10].
\end{split}
\end{equation*}
Since $i=333$ and $h=4$, $\Fib[i-1]=\Fib[332]=101000010010$, $\hat{i}=[a_{h+1}a_{h}\ldots a_1a_0]_F+1=[10010]_F+1=11$.
By Equation (\ref{E3.5}), $\mathrm{D}(2,333,20)=[R_a,R_b][\hat{i}]+f_{h-1}-2=10+5-2=13$.

\vspace{0.2cm}

\noindent\textbf{Remark.}
A famous conjecture of Fraenkel-Simpson \cite{FS1998}
states that a word of length $n$ contains fewer than $n$ distinct squares.
By Figure~\ref{Fig:1}(B), For fixed $n\geq2$ and $2f_h\leq n<2f_{h+1}$, the maximal element in $R_a$ and $R_b$ is $f_{h}$ for $n\in$ Range I, and $n-2f_{h-1}$ otherwise.
So

\begin{equation*}
\max\limits_{1\leq i\leq\infty}\mathrm{D}(2,i,n)=
\begin{cases}
f_{h}+f_{h-1}-2=f_{h+1}-2,&n\in\text{ Range I}\\
n-2f_{h-1}+f_{h-1}-2=n-f_{h-1}-2,&\text{otherwise}
\end{cases}
\end{equation*}
Thus $\mathrm{D}(2,i,n)<n$ for all $i,n\geq1$, and the conjecture holds for the Fibonacci word.

\begin{property}[]\
Any word of length $n$ in $\F$ contains fewer than $n$ distinct square factors.
\end{property}

In Section 6, we give some properties of function $D(2,i,n)$, such as:
(1) for $n$ large enough,
\begin{equation*}
\begin{cases}
0.7236n\leq \max\limits_{1\leq i\leq\infty}\mathrm{D}(2,i,n)\leq0.8090n,\quad
0.5393n\leq \min\limits_{1\leq i\leq\infty}\mathrm{D}(2,i,n)\leq0.6806n,\\
0.0528n\leq \max\limits_{1\leq i,j\leq\infty}\big|\mathrm{D}(2,i,n)-\mathrm{D}(2,j,n)\big|
\leq0.2547n;
\end{cases}
\end{equation*}
and (2) for fixed $n$ and any $M$ such that $\max\limits_{1\leq i\leq\infty}\mathrm{D}(2,i,n)\leq M\leq\min\limits_{1\leq i\leq\infty}\mathrm{D}(2,i,n)$,
there exists $i_0$ such that $D(2,i_0,n)=M$.

\section{The position sequences of squares}\label{PS}

In Section 2 (Q2), we claim that we can divided all squares of length $2f_m$ into two types by their kernels. We give the details in subsection below.

\subsection{Two cases of squares}

By Theorem~1.9 in \cite{HW2015-1} and Property \ref{G},
$$\{R_p(K_m),p\geq1\}=\{R_1(K_m),R_2(K_m)\}=\{K_mK_{m+1},K_mK_{m-1}\}.$$
Moreover, the return word sequence $\{R_p(K_m)\}_{p\geq1}$ is still a Fibonacci word
over $\{R_1(K_m),R_2(K_m)\}$ and $bb\not\prec\F$, so $R_2(K_m)R_2(K_m)\not\prec\F$.
Thus
\begin{equation*}
\begin{split}
&\{|R_p(K_m)R_{p+1}(K_m)|,p\geq1\}\\
=&\{|R_1(K_m)R_1(K_m)|,|R_1(K_m)R_2(K_m)|,|R_2(K_m)R_1(K_m)|\}
=\{2f_{m+2},f_{m+3}\}.
\end{split}
\end{equation*}

On the other hand, by Definition 2.9 and Corollary 2.10 in \cite{HW2015-1}, any factor $\w$ with kernel $K_m$ can be expressed uniquely as
$\w=K_{m+1}[i,f_{m+1}] K_m K_{m+1}[1,j],$ 
where $2\leq i\leq f_{m+1}+1$ and $0\leq j\leq f_{m+1}-1$.
So $\w\w\prec\F$ means
$$K_{m+1}[i,f_{m+1}] \underbrace{K_m K_{m+1}[1,j]K_{m+1}[i,f_{m+1}]}_{\text{denoted by }R(K_m)} K_m K_{m+1}[1,j]\prec\F.$$
By the ranges of $i$ and $j$, $|\w|=|R(K_m)|=f_{m+2}+j-i+1\leq f_{m+3}-2$.

So $|R(K_m)|\leq \min\{2f_{m+2},f_{m+3}\}$. This means $K_m$ occurs in $R(K_m)$ only once.
Thus
$R(K_m)\in\{R_p(K_m),p\geq1\}=\{K_mK_{m+1},K_mK_{m-1}\}.$
Two cases have to be considered.

\medskip

\textbf{Case 1.} $R(K_m)=R_1(K_m)=K_mK_{m+1}$.
In this case $|R(K_m)|=|R_1(K_m)|$ implies $j=i-1$.
So $0\leq j\leq f_{m+1}-1$ gives a range of $i$. Comparing this range with $2\leq i\leq f_{m+1}+1$,
we have $2\leq i\leq f_{m+1}$ and $m\geq0$. Moreover $|\w|=|R(K_m)|=f_{m+2}$ and
\begin{equation*}
\begin{split}
&\w^2=
K_{m+1}[i,f_{m+1}] K_{m} K_{m+1} K_{m} K_{m+1}[1,i-1]\\
=&K_{m+2}[i,f_{m+2}]\underline{K_{m+1}} K_{m+2}[1,f_{m}+i-1]
=K_{m+4}[i,2f_{m+2}+i-1].
\end{split}
\end{equation*}
The second and third equalities hold by Lemma 2.2 in \cite{HW2015-1}, i.e.,
$$K_{m+2}=F_{m+1}[f_{m+1}]K_{m+1}[2,f_{m+1}]K_{m}
=K_{m}K_{m+1}[1,f_{m+1}-1]F_{m+1}[f_{m+1}]$$
and $K_{m+4}=K_{m+2}K_{m+1}K_{m+2}$.
Thus $K_{m+1}\prec\w\w\prec K_{m+4}[2,f_{m+4}-1]$.
By the cylinder structure of palindromes \cite{HW2016-3}, both $K_{m+2}$ and $K_{m+3}$ are not the factors of $\w\w$. Thus $K_{m+1}$ is the maximal kernel word in $\w\w$, i.e. $\Ker(\w\w)=K_{m+1}$ for $m\geq0$.

\medskip

\textbf{Case 2.} $R(K_m)=R_2(K_m)=K_mK_{m-1}$.
$|R(K_m)|=|R_2(K_m)|$ implies $j=i-f_m-1$.
Consequently, $f_m+1\leq i\leq f_{m+1}+1$ and $m\geq-1$.
Moreover $|\w|=|R(K_m)|=f_{m+1}$ and
\begin{equation*}
\begin{split}
&\w^2=
K_{m+1}[i,f_{m+1}] K_{m} K_{m-1} K_{m} K_{m+1}[1,i-f_m-1]\\
=&K_{m+3}[f_{m+2}+i,f_{m+3}] \underline{K_{m+2}} K_{m+3}[1,i-f_m-1]
=K_{m+5}[f_{m+2}+i,f_{m+3}+f_{m+1}+i-1].
\end{split}
\end{equation*}
Thus $K_{m+2}\prec\w\w\prec K_{m+5}[2,f_{m+5}-1]$.
By an analogous argument $\Ker(\w\w)=K_{m+2}$.

\medskip

Thus we get the expressions of all squares in the Fibonacci word.

\begin{definition}[]\ For all $m,p\geq1$,
\begin{equation}\label{E2.1}
\begin{cases}
K^1_{m,p}=
\{(\w^2)_p\mid \Ker(\w^2)=K_m,|\w|=f_{m+1},\w^2\prec\F\},\\
K^2_{m,p}=
\{(\w^2)_p\mid \Ker(\w^2)=K_m,|\w|=f_{m-1},\w^2\prec\F\}.
\end{cases}
\end{equation}
Each set contains several consecutive positive integers.
\end{definition}


By Case 1 above,
$K^1_{m,p}=\{(\w^2)_p\mid \w^2=K_{m+1}[i,f_{m+1}]K_{m}K_{m+1}[1,f_{m-1}+i-1],
2\leq i\leq f_{m}\}$
for $m\geq1$. For each fixed $i$, $\Ker(\w^2)=K_m$. By Property \ref{wp},
$$(K_m)_p-(\w^2)_p=(K_m)_1-(\w^2)_1=|K_{m+1}[i,f_{m+1}]|=f_{m+1}-i+1.$$
So $K^1_{m,p}=\{(K_m)_p-f_{m+1}+i-1\mid 2\leq i\leq f_{m}\}$ for $m\geq1$.
Similarly, for $m\geq1$
\begin{equation*}
\begin{split}
K^2_{m,p}
&=\{(\w^2)_p\mid \w^2=K_{m+1}[f_{m}+i,f_{m+1}] K_{m} K_{m+1}[1,i-f_{m-2}-1],
f_{m-2}+1\leq i\leq f_{m-1}+1\}\\
&=\{(K_m)_p-f_{m-1}+i-1\mid f_{m-2}+1\leq i\leq f_{m-1}+1\}.
\end{split}
\end{equation*}

Thus for $m,p\geq1$
\begin{equation}\label{E2.2}
\begin{cases}
K^1_{m,p}=\{(K_m)_p-f_{m+1}+1,\ldots,(K_m)_p-f_{m-1}-1\},
&\#K^1_{m,p}=f_{m}-1,\\
K^2_{m,p}=\{(K_m)_p-f_{m-3},\ldots,(K_m)_p\},
&\#K^2_{m,p}=f_{m-3}+1.
\end{cases}
\end{equation}
By Property \ref{P}, $(K_m)_1=f_{m+1}$. Thus for $m\geq1$
$$K^1_{m,1}=\{1,\ldots,f_{m}-1\}\text{ and }
K^2_{m,1}=\{f_{m}+f_{m-2},\ldots,f_{m+1}\}.$$

For instance, $(K_1)_1=3$,  $(K_1)_2=8$,  $(K_2)_1=5$ and $(K_2)_2=13$. Thus
the first few values of $K^1_{m,p}$ and $K^2_{m,p}$ are:
$K^1_{1,1}=\{1\}$, $K^1_{1,2}=\{6\}$, $K^1_{2,1}=\{1,2\}$, $K^1_{2,2}=\{9,10\}$,
$K^2_{1,1}=\{3\}$, $K^2_{1,2}=\{8\}$, $K^2_{2,1}=\{4,5\}$, $K^2_{2,2}=\{12,13\}$.


\subsection{The position sequences of squares}

Now we consider all squares of length $2f_m$.
For $m\geq2$, there 
are two types of distinct squares of length $2f_m$, whose positions are in sets $K^1_{m-1,p}$ and $K^2_{m+1,p}$, respectively.

\medskip

\textbf{1.}
Recall that $K^1_{m-1,1}=\{1,2,\ldots,f_{m-1}-1\}$ for $m\geq2$.
We denote square $\F[j;2f_m]$ by code $j$ for $1\leq j\leq f_{m-1}-1$.
We define sequence $^1\of^{2,m}$ for $i\geq1$ and $1\leq j\leq f_{m-1}-1$ that
\begin{equation*}
^1\of^{2,m}[i]=
\begin{cases}
j,&\text{if }\F[i;2f_m]=\F[j;2f_m],\\
0,&\text{otherwise.}
\end{cases}
\end{equation*}
We call it the position sequences of squares of length $2f_m$ with kernel $K_{m-1}$ for $m\geq2$.

By Equation (\ref{E2.2}) and Property \ref{P}, $^1\of^{2,m}$ is a Fibonacci word over the alphabet $\{^1\!R^{2,m}_a,{}^1\!R^{2,m}_b\}$ with prefix $^1\!R^{2,m}_0$ for $m\geq2$, where
\begin{equation}\label{E2.3}
\begin{cases}
\begin{aligned}
^1\!R^{2,m}_a&={}^1\of^{2,m}[\min K^1_{m-1,1},\min K^1_{m-1,2}-1]
={}^1\of^{2,m}[1,f_{m+1}]\\
&=[1,2,\ldots,f_{m-1}-1,\underbrace{0,\ldots,0}_{f_{m}+1}],\\
^1\!R^{2,m}_b&={}^1\of^{2,m}[\min K^1_{m-1,2},\min K^1_{m-1,3}-1]
={}^1\of^{2,m}[f_{m+1}+1,f_{m+2}]\\
&={}^1\of^{2,m}[f_{m+1}+1;f_{m}]
=[1,2,\ldots,f_{m-1}-1,\underbrace{0,\ldots,0}_{f_{m-2}+1}],\\
^1\!R^{2,m}_0&={}^1\of^{2,m}[1,\min K^1_{m-1,1}-1]=\varepsilon.
\end{aligned}
\end{cases}
\end{equation}

\medskip

\textbf{2.}
Recall that $K^2_{m+1,1}=\{f_{m+1}+f_{m-1},\ldots,f_{m+2}\}$ for $m\geq0$.
We denote square $\F[f_{m+1}+j;2f_m]$ by code $j$ for $f_{m-1}\leq j\leq f_{m}$.
We define sequence $^2\of^{2,m}$ for $i\geq1$ and $f_{m-1}\leq j\leq f_{m}$ that
\begin{equation*}
{}^2\of^{2,m}[i]=
\begin{cases}
j,&\text{if }\F[i;2f_m]=\F[f_{m+1}+j;2f_m],\\
0,&\text{otherwise.}
\end{cases}
\end{equation*}
We call it the position sequences of squares of length $2f_m$ with kernel $K_{m+1}$ for $m\geq0$.

By Equation (\ref{E2.2}) and Property \ref{P}, $^2\of^{2,m}$ is a Fibonacci word over the alphabet $\{\tilde{R}^{2,m}_a,\tilde{R}^{2,m}_b\}$ with prefix $\tilde{R}^{2,m}_0$ for $m\geq2$, where
\begin{equation*}
\begin{cases}
\begin{aligned}
\tilde{R}^{2,m}_a&={}^2\of^{2,m}[\min K^2_{m+1,1},\min K^2_{m+1,2}-1]
={}^2\of^{2,m}[f_{m+1}+f_{m-1},4f_{m+1}-1]\\
&={}^2\of^{2,m}[f_{m+1}+f_{m-1};f_{m+3}]
=[f_{m-1},f_{m-1}+1,\ldots,f_{m},\underbrace{0,\ldots,0}_{2f_{m+1}+f_{m-1}-1}],\\
\tilde{R}^{2,m}_b&={}^2\of^{2,m}[\min K^2_{m+1,2},\min K^2_{m+1,3}-1]
={}^2\of^{2,m}[4f_{m+1},f_{m+2}+4f_{m+1}-1]\\
&={}^2\of^{2,m}[4f_{m+1};f_{m+2}]
=[f_{m-1},f_{m-1}+1,\ldots,f_{m},\underbrace{0,\ldots,0}_{f_{m+1}+f_{m-1}-1}],\\
\tilde{R}^{2,m}_0&={}^2\of^{2,m}[1,\min K^2_{m+1,1}-1]
={}^2\of^{2,m}[1,f_{m+1}+f_{m-1}-1]=[\underbrace{0,\ldots,0}_{f_{m+1}+f_{m-1}-1}].
\end{aligned}
\end{cases}
\end{equation*}

Notice that both $\tilde{R}^{2,m}_a$ and $\tilde{R}^{2,m}_b$ have
suffix $\tilde{R}^{2,m}_0$, we can
shift to left each return word $f_{m+1}+f_{m-1}-1$ letters. Thus we get the
equivalent expressions that $\hat{R}^{2,m}_0=\varepsilon$ and
\begin{equation*}
\begin{cases}
\hat{R}^{2,m}_a=[\underbrace{0,\ldots,0}_{f_{m+1}+f_{m-1}-1},f_{m-1},f_{m-1}+1,\ldots,f_{m},\underbrace{0,\ldots,0}_{f_{m+1}}],\\
\hat{R}^{2,m}_b=[\underbrace{0,\ldots,0}_{f_{m+1}+f_{m-1}-1},f_{m-1},f_{m-1}+1,\ldots,f_{m}].
\end{cases}
\end{equation*}
Furthermore, the Fibonacci morphism $\sigma$ over the alphabet $\{{}^2\!R^{2,m}_a,{}^2\!R^{2,m}_b\}$ where
\begin{equation}\label{E2.4}
{}^2\!R^{2,m}_a=[\underbrace{0,\ldots,0}_{f_{m+1}}]\text{ and }
{}^2\!R^{2,m}_b=[\underbrace{0,\ldots,0}_{f_{m-1}-1},f_{m-1},f_{m-1}+1,\ldots,f_{m}]
\end{equation}
satisfies that $\sigma^2({}^2\!R^{2,m}_a)=\hat{R}^{2,m}_a$ and $\sigma^2({}^2\!R^{2,m}_b)=\hat{R}^{2,m}_b$.
This means $^2\of^{2,m}=\F({}^2\!R^{2,m}_a,{}^2\!R^{2,m}_b)$.

\bigskip

By the argument above, we give the definition of position sequences of squares.

\begin{definition}[]\label{D3.4}
We define sequence $\of^{2,m}$ ($m\geq0$) that for $i\geq1$ and $1\leq j\leq f_{m}$
\begin{equation*}
\of^{2,m}[i]=
\begin{cases}
j,&\text{if }\F[i;2f_m]=\F[j;2f_m]\text{ for }1\leq j\leq f_{m-1}-1,\\
j,&\text{if }\F[i;2f_m]=\F[f_{m+1}+j;2f_m]\text{ for }f_{m-1}\leq j\leq f_{m},\\
0,&\text{otherwise.}
\end{cases}
\end{equation*}
We call $\of^{2,m}$ the position sequences of squares of length $2f_m$ for $m\geq0$.
\end{definition}

When $m\geq2$,
if there exist an integer $p$ such that $i\in K^1_{m-1,p}$ (resp. $i\in K^2_{m+1,p}$), $\F[i;2f_m]$ is a square with kernel $K_{m-1}$ (resp. $K_{m+1}$).
Since the kernel of a factor is unique \cite{HW2015-1}, sets $K^1_{m-1,p}$ and $K^2_{m+1,q}$ are disjoint for all $p,q\geq1$.
This means ${}^1\of^{2,m}[i]\times{}^2\of^{2,m}[i]=0$ for $i\geq1$.
Thus $\of^{2,m}={}^1\of^{2,m}+{}^2\of^{2,m}$.
By Equations (\ref{E2.3}) and (\ref{E2.4}),
$\of^{2,m}=\F(R^{2,m}_a,R^{2,m}_b)$ where
\begin{equation}\label{E2.5}
\begin{cases}
R^{2,m}_a={}^1\!R^{2,m}_a+{}^2\!R^{2,m}_a=[1,2,\ldots,f_{m-1}-1,\underbrace{0,\ldots,0}_{f_{m}+1}],\\
R^{2,m}_b={}^1\!R^{2,m}_b+{}^2\!R^{2,m}_b=[1,2,\ldots,f_{m}].
\end{cases}
\end{equation}

When $m=0,1$, there are $\#K^2_{m+1,p}=f_{m-2}+1$ distinct squares of length $2f_m$.
Thus sequence $\of^{2,m}$ is exact sequence ${}^2\of^{2,m}$.
But since all digits in ${}^1\!R^{2,m}_a$ and ${}^1\!R^{2,m}_b$ are zero for $m=0,1$, the expressions in Equation (\ref{E2.5}) still hold.

\begin{theorem}[]\label{P2.5}\
$\of^{2,m}$ is a Fibonacci word over 
$\{R^{2,m}_a,R^{2,m}_b\}$ given in Equation (\ref{E2.5}) for $m\geq0$.
\end{theorem}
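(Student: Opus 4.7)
The plan is to derive the theorem directly from the two partial results already obtained in Subsection 3.2: that ${}^1\of^{2,m}$ is the Fibonacci word over $\{{}^1\!R^{2,m}_a,{}^1\!R^{2,m}_b\}$ exhibited in (\ref{E2.3}), and that ${}^2\of^{2,m}$ equals $\F({}^2\!R^{2,m}_a,{}^2\!R^{2,m}_b)$ from (\ref{E2.4}). First I will establish the pointwise identity $\of^{2,m}={}^1\of^{2,m}+{}^2\of^{2,m}$. By the case analysis of Subsection~3.1, every square of length $2f_m$ with $m\geq2$ has kernel either $K_{m-1}$ or $K_{m+1}$, and since the kernel of a factor is unique by Theorem~1.9 of \cite{HW2015-1}, the index sets $\bigcup_p K^1_{m-1,p}$ and $\bigcup_p K^2_{m+1,p}$ are disjoint. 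Hence ${}^1\of^{2,m}[i]\cdot{}^2\of^{2,m}[i]=0$ for every $i\geq 1$, so coordinatewise addition reproduces exactly the three-case rule in Definition~\ref{D3.4}.

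The second key observation is that both Fibonacci expansions begin at position $1$ and use letters of the same lengths $|{}^1\!R^{2,m}_a|=|{}^2\!R^{2,m}_a|=f_{m+1}$ and $|{}^1\!R^{2,m}_b|=|{}^2\!R^{2,m}_b|=f_m$. Consequently they partition $\mathbb{Z}^+$ into the same sequence of blocks, the $k$-th block being an interval of length $f_{m+1}$ if $\F[k]=a$ and $f_m$ if $\F[k]=b$. On each block both sequences output a fixed word of the appropriate length, so their pointwise sum again outputs a fixed word of that length, whence $\of^{2,m}$ is itself a Fibonacci word over the alphabet $\{{}^1\!R^{2,m}_a+{}^2\!R^{2,m}_a,\,{}^1\!R^{2,m}_b+{}^2\!R^{2,m}_b\}$. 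A short direct calculation using (\ref{E2.3}) and (\ref{E2.4}) then verifies that these sums coincide with the $R^{2,m}_a,R^{2,m}_b$ of (\ref{E2.5}): the nonzero coordinates of ${}^1\!R^{2,m}_b$ and ${}^2\!R^{2,m}_b$ occupy the disjoint ranges $[1,f_{m-1}-1]$ and $[f_{m-1},f_m]$ and fit together into $[1,2,\ldots,f_m]$, while ${}^2\!R^{2,m}_a$ is identically zero so the $a$-letter sum reduces to ${}^1\!R^{2,m}_a$ unchanged.

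For the boundary cases $m=0,1$ the first kernel type contributes no squares, so ${}^1\of^{2,m}$ is the zero sequence and the conclusion $\of^{2,m}=\F({}^2\!R^{2,m}_a,{}^2\!R^{2,m}_b)$ holds trivially; the expressions in (\ref{E2.5}) remain correct since the first summand is identically zero in every coordinate in that range. The main subtlety—and the only step that needs explicit justification—is that two independently constructed Fibonacci words sharing the same pair of letter lengths necessarily share the same block partition of $\mathbb{Z}^+$. This is automatic in the present setting: both ${}^1\of^{2,m}$ and ${}^2\of^{2,m}$ arise from applying Property~\ref{G} to the return-word decomposition of a kernel word of $\F$, so in both constructions the underlying combinatorial skeleton is $\F$ itself and the $a$/$b$-assignment of successive blocks is intrinsic to the indexing, forcing the two decompositions to align.
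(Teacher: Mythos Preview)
Your proposal is correct and follows essentially the same approach as the paper: establish $\of^{2,m}={}^1\of^{2,m}+{}^2\of^{2,m}$ via uniqueness of kernels, observe that both summands are Fibonacci words with letters of matching lengths $f_{m+1}$ and $f_m$ (hence share the same block partition), and compute the letter-wise sums to obtain (\ref{E2.5}), with the cases $m=0,1$ handled by noting that ${}^1\of^{2,m}$ degenerates to the zero sequence. Your write-up is somewhat more explicit than the paper's about why the two block decompositions align, but the argument is the same.
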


\section{The number of squares in $\F[i;n]$}\label{SecB}

By the definition of sequence $\of^{2,m}$, $\of^{2,m}[i]=j$ ($1\leq j\leq f_m$) means $\F[i;2f_m]$ is a square of length $2f_m$ encoded by $j$; and $\of^{2,m}[i]=0$ means $\F[i;2f_m]$ is not a square.
Thus in order to count the number of distinct squares of length $2f_m$ in $\F[i;n]$, we only need to consider the number of distinct letters in $\of^{2,m}[i;n-2f_m+1]$, i.e., $\#\of^{2,m}[i;n-2f_m+1]$.

When $n<2f_{h+1}$, there is no square of length $2f_m$ ($m\geq h+1$) in $\F[i;n]$.
Thus we only need to consider sequences $\of^{2,m}$ for $0\leq m\leq h$ when $2f_h\leq n<2f_{h+1}$.


\subsection{Three useful lemmas}

Let $R_a$ and $R_b$ be two words of lengths $f_m$ and $f_{m-1}$ for $m\geq0$.
We give three useful lemmas about the sequence $\F(R_a,R_b)$ in this subsection.

\begin{lemma}\label{L1}\ For $i> f_m$, let $\Fib(i-1)=a_ka_{k-1}\ldots a_{1}a_0$ then $k\geq m$ and
$$\F(R_a,R_b)[i]=(R_aR_b)\big[[a_{m}a_{m-1}\ldots a_0]_F+1\big].$$
\end{lemma}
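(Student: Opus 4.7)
The plan is induction on $k - m$, where $k$ is the top index with $a_k = 1$ in $\Fib(i-1) = a_k a_{k-1} \ldots a_0$. First, $k \geq m$ is immediate: $i > f_m$ gives $i - 1 \geq f_m$, so the Zeckendorf expansion of $i - 1$ must have its leading term at index at least $m$. The engine of the induction is the self-similarity identity $\F(R_a, R_b) = \F(R_a R_b, R_a)$, obtained by composing the block substitution $\tau : a \mapsto R_a, b \mapsto R_b$ with the Fibonacci morphism $\sigma$ and invoking $\sigma(\F) = \F$: the composite $\tau \circ \sigma$ sends $a \mapsto R_a R_b$ and $b \mapsto R_a$, so $\F(R_a, R_b) = \tau(\F) = \tau(\sigma(\F)) = \F(R_a R_b, R_a)$. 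Since $|R_a R_b| = f_{m+1}$ and $|R_a| = f_m$, the right-hand side is a Fibonacci word at level $m + 1$, enabling descent in $k - m$.

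The base case $k = m$, equivalently $f_m < i \leq f_{m+1}$, is handled directly: the prefix $\F(R_a, R_b)[1, f_{m+1}]$ equals $R_a R_b$ because $\F$ starts with $ab$, so $\F(R_a, R_b)[i] = (R_a R_b)[i]$; and since $a_j = 0$ for all $j > m$, the number $[a_m a_{m-1} \ldots a_0]_F$ equals $i - 1$, matching the formula. For the inductive step $k > m$ (which also ensures $i > f_{m+1}$), I apply the inductive hypothesis at level $m + 1$ to $\F(R_a R_b, R_a)$ at position $i$ to obtain
\[
\F(R_a, R_b)[i] = (R_a R_b R_a)\big[[a_{m+1} a_m \ldots a_0]_F + 1\big].
\]
A case split on $a_{m+1}$ then reduces this to $(R_a R_b)[[a_m a_{m-1} \ldots a_0]_F + 1]$. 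If $a_{m+1} = 0$, then $[a_{m+1} \ldots a_0]_F = [a_m \ldots a_0]_F \leq f_{m+1} - 1$, so the index lands in the prefix $R_a R_b$ of $R_a R_b R_a$ and the two expressions agree. If $a_{m+1} = 1$, the Zeckendorf non-consecutivity forces $a_m = 0$, and the index $f_{m+1} + [a_{m-1} \ldots a_0]_F + 1$ falls in the trailing $R_a$ at offset $[a_{m-1} \ldots a_0]_F + 1$; on the other side, $(R_a R_b)[[a_m \ldots a_0]_F + 1] = (R_a R_b)[[a_{m-1} \ldots a_0]_F + 1]$ lies in the $R_a$ prefix of $R_a R_b$, yielding the same letter $R_a[[a_{m-1} \ldots a_0]_F + 1]$.

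The step I expect to demand the most care is this final case split, both in keeping every index within the intended sub-block of $R_a R_b R_a$ via the constraint $a_{j+1} \times a_j = 0$, and in adopting the convention $[a_{m-1} \ldots a_0]_F = 0$ when $m = 0$ so as to accommodate the edge case where the low-part subscript is notationally empty.
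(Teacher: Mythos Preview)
Your proof is correct. The paper itself omits a proof of this lemma, supplying only a worked example ($i=20$, $R_a=ABC$, $R_b=DE$), so there is no argument to compare against. Your induction on $k-m$ via the self-similarity $\F(R_a,R_b)=\F(R_aR_b,R_a)$ is clean and complete: the base case $k=m$ reads off the prefix $R_aR_b$ directly, and the inductive step correctly splits on $a_{m+1}$, using the Zeckendorf non-adjacency constraint $a_{m+1}a_m=0$ to bound the index into the appropriate sub-block of $R_aR_bR_a$. The edge-case convention $[a_{m-1}\ldots a_0]_F=0$ when $m=0$ is harmless and consistent with the paper's $[\cdot]_F$ notation.
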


\noindent\textbf{Example.} Take $i=20$, $R_a=ABC$ and $R_b=DE$. Then $\Fib(i-1)=101001$.
Moreover, $\F(R_a,R_b)=ABC DE ABC ABC DE ABC DE ABC\cdots$, so $\F(R_a,R_b)[i]=B$.
On the other hand, $m=2$ and
$$(R_aR_b)\big[[a_{m}a_{m-1}\ldots a_0]_F+1\big]=(ABCDE)\big[[001]_F+1\big]=ABCDE[2]=B.$$

\begin{lemma}\label{L2}\
Let $h\geq0$ be an integer, and
\begin{equation*}
M=\begin{cases}
h,&h\geq m+2,\\
m+2,&h=m+1,\\
m+1,&h=1,2,\ldots,m,\\
m,&h=0.
\end{cases}
\end{equation*}
Then the sequence $\Theta=\{\F(R_a,R_b)[i;f_h]\}_{i\geq1}$ is a Fibonacci word over $\{\Theta[1,f_{M}],\Theta[f_{M}+1,f_{M+1}]\}$ without prefix. 
\end{lemma}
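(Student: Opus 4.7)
The plan is to prove Lemma~\ref{L2} by verifying directly the self-similarity identities that characterize a Fibonacci word over $\{A,B\}$ without prefix, transported to the sliding-window sequence $\Theta$ via the substitution structure of $\F(R_a,R_b)$.

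I would first handle the base case $h=0$. Here each $\Theta[i]$ is a single letter, $\Theta$ coincides with $\F(R_a,R_b)$ as a letter sequence, and with $M=m$ the alphabet $\{\Theta[1,f_m],\Theta[f_m+1,f_{m+1}]\}$ equals $\{R_a,R_b\}$ by definition; the conclusion then reduces to the defining property that $\F(R_a,R_b)$ is a Fibonacci word over $\{R_a,R_b\}$ starting (without prefix) with $R_a$.

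For $h\geq1$, the key step is to verify the A-block identification
\[
\F(R_a,R_b)[1,\,f_M+f_h-1]=\F(R_a,R_b)[f_{M+1}+1,\,f_{M+1}+f_M+f_h-1],
\]
which entry-by-entry yields $\Theta[f_{M+1}+1,f_{M+1}+f_M]=\Theta[1,f_M]$. Viewing $\F(R_a,R_b)$ at the hierarchical level $\ell=M+1-m$ as a Fibonacci word over the super-blocks $F_\ell(R_a,R_b)$ and $F_{\ell-1}(R_a,R_b)$ of lengths $f_{M+1}$ and $f_M$, the two sides are initial segments of $F_\ell(R_a,R_b)F_{\ell-1}(R_a,R_b)$ and $F_{\ell-1}(R_a,R_b)F_\ell(R_a,R_b)$ respectively. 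The classical identity that $F_\ell F_{\ell-1}$ and $F_{\ell-1}F_\ell$ agree on their first $f_{\ell+1}-2$ letters, combined with the observation that under the substitution $a\mapsto R_a$, $b\mapsto R_b$ the two differing trailing letters expand to $|R_a|+|R_b|=f_{m+1}$ characters (the trailing digraph is always $ab$ or $ba$, never $aa$, since the last two letters of $F_{\ell-1}$ and of $F_{\ell-2}$ lie in this set for $\ell\geq 1$), yields an agreement of $f_{M+2}-f_{m+1}$ letters in the substituted concatenations. The A-block identification therefore holds whenever $f_M+f_h-1\leq f_{M+2}-f_{m+1}$, and a direct case check on the four ranges of $h$ confirms that the lemma's choice of $M$ satisfies this inequality uniformly in $m$. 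The B-block identification follows by an analogous shift argument, and ``without prefix'' is automatic since by construction the A- and B-blocks start at position~$1$ of $\Theta$.

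The main technical obstacle is ensuring that the stated value of $M$ works uniformly across all $m\geq0$. In the transition regime $h=m+1$, the more economical choice $M=m+1$ satisfies the agreement inequality only for $m\leq1$; for $m\geq2$ it fails because $f_{m+1}>f_m+1$, so the lemma's bump to $M=m+2$ is needed to restore uniform validity. Tracking the resulting Fibonacci-length arithmetic carefully in this regime, and verifying the analogous computation for the B-block shift, is the detail-heavy part of the proof.
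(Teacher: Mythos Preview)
Your approach is genuinely different from the paper's. The paper proceeds by cases and induction: it first treats $h=m+2$ by observing that in the block decomposition $\F(R_a,R_b)=R_aR_bR_a\cdot R_aR_b\cdot R_aR_bR_a\cdots$ every block of either type is followed by the \emph{same} word $R_aR_bR_a$ of length $f_h$, so the sliding window depends only on block type and offset; it then extends to $h\ge m+3$ by rewriting $\F(R_a,R_b)$ as $\F(R_aR_b,R_a)$, handles $h=m+1$ by truncating the $h=m+2$ windows, and argues analogously for $h\le m$ from the anchor case $h=m$. You instead work at a single hierarchical level $\ell=M+1-m$ and invoke the near-commutation fact that $F_\ell F_{\ell-1}$ and $F_{\ell-1}F_\ell$ agree except in their last two letters, so that their $R$-substitutions agree on the first $f_{M+2}-f_{m+1}$ characters, reducing everything to the inequality $f_h-1\le f_{M+1}-f_{m+1}$.

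There is, however, a real gap. Your ``A-block identification'' as stated establishes only $\Theta[1,f_M]=\Theta[f_{M+1}+1,f_{M+2}]$, i.e.\ the third block of $\Theta$ equals the first; one such identity (together with a vague ``B-block'' analogue) does not force $\Theta$ to be a Fibonacci word. What is missing is exactly the uniformity the paper supplies: in the level-$(M-m)$ decomposition of $\F(R_a,R_b)$ into super-blocks $\mathcal A'=F_{M-m}(R_a,R_b)$ and $\mathcal B'=F_{M-m-1}(R_a,R_b)$, one must argue that at \emph{every} $\mathcal A'$-block the two possible continuations $\mathcal B'\mathcal A'$ and $\mathcal A'\mathcal B'$ share a common prefix of length at least $f_h-1$ (and that every $\mathcal B'$-block is followed by enough determined characters). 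Your near-commutation inequality is precisely the right tool for this, but it must be applied at every super-block boundary, not only at positions $1$ and $f_{M+1}+1$; once that is done, $\Theta$ inherits the Fibonacci structure from $\F(R_a,R_b)$ and the proof closes. (A minor side issue: your parenthetical ``for $\ell\ge 1$'' about the trailing digraph of $F_{\ell-1}$ fails at $\ell=1$, where $F_0=a$ has no last two letters; you need $\ell\ge 2$, which your case analysis in fact guarantees for $h\ge 1$.)
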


\noindent\textbf{Example.} Let $m=h=1$. By Lemma \ref{L2}, $M=m+1=2$ and
the sequence $\{\F(ab,c)[i;2]\}_{i\geq1}$ is a Fibonacci word over $\{\Theta[1,f_{M}],\Theta[f_{M}+1,f_{M+1}]\}=\{ABC,AD\}$
where $A=ab$, $B=bc$, $C=ca$ and $D=ba$. We check this conclusion in Figure \ref{Fig:11}.

\begin{figure}[!h]
\centering
\setlength{\unitlength}{0.83mm}
\begin{picture}(152,60)
\put(19,55){$\F(ab,c)$}
\put(35,55){$=$}
\put(40,55){${\begin{array}{*{15}{p{0.6cm}}}
a&b&c&a&b&a&b&c&a&b&c&a&b&a&...\end{array}}$}
\put(0,48){$\{\F(ab,c)[i;2]\}_{i\geq1}$}
\put(35,48){$=$}
\put(40,48){${\begin{array}{*{15}{p{0.6cm}}}
ab&bc&ca&ab&ba&ab&bc&ca&ab&bc&ca&ab&ba& &...\end{array}}$}
\put(40,41){${\begin{array}{*{15}{p{0.6cm}}}
A&B&C&A&D&A&B&C&A&B&C&A&D&&...\end{array}}$}
\put(40,41){$\underbrace{\hspace{1.6cm}}_{ABC}$}
\put(63,41){$\underbrace{\hspace{1.0cm}}_{AD}$}
\put(79,41){$\underbrace{\hspace{1.6cm}}_{ABC}$}
\put(102,41){$\underbrace{\hspace{1.6cm}}_{ABC}$}
\put(125,41){$\underbrace{\hspace{1.0cm}}_{AD}$}
\put(7,41){$\F(ABC,AD)$}
\put(35,41){$=$}
\put(148.2,41){$...$}
\end{picture}
\vspace{-3cm}
\caption{For instance, the sequence $\{\F(ab,c)[i;2]\}_{i\geq1}=\F(ABC,AD)$.
\label{Fig:11}}
\end{figure}

\begin{proof}
(1) We first let $h=m+2$.
Since $\F=\F(F_2,F_1)$, a decomposition of $\F$ is
\begin{equation}\label{E5.2}
\F=F_2\cdot F_1\cdot F_2\cdot F_2\cdot F_1\cdots
\end{equation}
Here  $\w\cdot\nu$ is the concatenation of two words $\w$ and $\nu$.
In this decomposition, $F_2$ and $F_1$ are used as building blocks of $\F$.
Since $bb\not\!\prec\F$, each occurrence of letter $a$ in $\F$ is followed by word $ba$ or $a$. Thus each block $F_2$ in Equation (\ref{E5.2}) is followed by the blocks $F_1\cdot F_2=ababa$ or $F_2=aba$. This means, each block $F_2=aba$ is followed by word $aba$.
Notice that, the last word $aba$ may not be a block.
Similarly, $\F(R_a,R_b)$ has a decomposition with blocks $R_aR_bR_a$ and $R_aR_b$ that
\begin{equation}\label{E5.3}
\F(R_a,R_b)=R_aR_bR_a\cdot R_aR_b\cdot R_aR_bR_a\cdot R_aR_bR_a\cdot R_aR_b\cdots
\end{equation}
And each block $R_aR_bR_a$ in Equation (\ref{E5.3}) is followed by word $R_aR_bR_a$ (may not be a block).
On the other hand, since $|R_aR_bR_a|=f_{m+2}$, each word starting in block $R_aR_bR_a$ of length $f_{m+2}$ belongs in $R_aR_bR_a\cdot R_aR_bR_a$.
This means, for fixed $1\leq i\leq f_{m+2}$, the factor of length $f_{m+2}$ starting in the $i$-th letter in any block $R_aR_bR_a$ are $(R_aR_bR_aR_aR_bR_a)[i;f_{m+2}]$.

Similarly, each block $R_aR_b$ is followed by the word $R_aR_bR_a$ too. And for $1\leq i\leq f_{m+1}$, the factor of length $f_{m+2}$ starting in the $i$-th letter in any block $R_aR_b$ are $(R_aR_bR_aR_bR_a)[i;f_{m+2}]$.
Thus the conclusion holds for $h=m+2$.

(2) Since $\F(R_a,R_b)$ is also a Fibonacci word over $\{R_aR_b,R_a\}$ and $|R_aR_b|=f_{m+1}$, the conclusion holds for $h=m+3$ by (1) above.
And the conclusion holds for all $h\geq m+3$ by induction.

(3) When $h=m+1$, $\F(R_a,R_b)[i;f_h]$ is prefix of $\F(R_a,R_b)[i;f_{m+2}]$. By (1), the conclusion holds.

(4) When $h=m$, $\F(R_a,R_b)$ has a decomposition
$\F(R_a,R_b)=R_aR_b\cdot R_a\cdot R_aR_b\cdot R_aR_b\cdot R_a\cdots$.
Each block $R_aR_b$ (resp. $R_a$) is followed by the word $R_a$.
On the other hand, $|R_a|=f_m$. This means, for $1\leq i\leq f_{m+1}$ (resp. $1\leq i\leq f_{m}$), the factor of length $f_{m}$ starting in the $i$-th letter in any block $R_aR_b$ (resp. $R_a$) are $(R_aR_bR_a)[i;f_{m}]$ (resp. $(R_aR_a)[i;f_{m}]$).
Thus the conclusion holds for $h=m$.

(5) When $h=1,\ldots,m-1$, by (4) and an analogous argument in (2), the conclusion holds too.
\end{proof}

\begin{lemma}\label{L3}\
Let $2f_h\leq n<2f_{h+1}$ and $0\leq m\leq h$.
$\Theta=\{\of^{2,m}[i;n-2f_m+1]\}_{i\geq1}$ is a Fibonacci word over $\{\Theta[1,f_{h+3}],\Theta[f_{h+3}+1,f_{h+4}]\}$ without prefix.

\end{lemma}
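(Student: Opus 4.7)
The plan is to lift the super-block argument behind Lemma~\ref{L2} to a higher level at which the super-blocks themselves have lengths $f_{h+3}$ and $f_{h+2}$. To set this up, I would observe that $\of^{2,m}=\F(R^{2,m}_a,R^{2,m}_b)$ (Theorem~\ref{P2.5}) is itself a Fibonacci word at every level: setting $k=h-m+3\geq 3$ and iterating the Fibonacci substitution starting from $A_1=R^{2,m}_a$, $B_1=R^{2,m}_b$, I obtain level-$k$ super-blocks $A_k,B_k$ satisfying $|A_k|=f_{m+k}=f_{h+3}$, $|B_k|=f_{m+k-1}=f_{h+2}$, with $A_k=A_{k-1}B_{k-1}$, $B_k=A_{k-1}$, and $\of^{2,m}=\F(A_k,B_k)$.

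The key step will be a context lemma: in $\of^{2,m}$, the first $f_{h+3}$ letters immediately following any occurrence of a super-block spell out $A_k$, regardless of the super-block type. This is immediate after a $B_k$-block, since $B_k$ is always followed by $A_k$ in the Fibonacci decomposition. After an $A_k$-block the continuation is either $A_k$ (done) or $B_k A_k$; in the latter case the first $f_{h+3}$ letters are $B_k$ followed by a prefix of $A_k$ of length $f_{h+1}$. Using the recursions $A_k=A_{k-1}B_{k-1}$ and $B_{k-1}=A_{k-2}$, that prefix is $A_{k-2}=B_{k-1}$, so the concatenation $A_{k-1}B_{k-1}$ is again $A_k$.

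The bound on $N$ is then straightforward: from $n<2f_{h+1}$ and $f_{h+3}=2f_{h+1}+f_h$, we get $N=n-2f_m+1\leq 2f_{h+1}-2f_m<f_{h+3}$. Hence for every position $i$ inside a super-block $X\in\{A_k,B_k\}$, the window $\of^{2,m}[i;N]$ lies entirely in $X$ followed by the fixed $f_{h+3}$-letter tail $A_k$ supplied by the context lemma, so it depends only on the type $X$ and on $i$, not on the particular occurrence of $X$. Calling the resulting length-$|X|$ word of windows $W_X$, the concatenation pattern of the $W_X$'s in $\Theta$ copies the Fibonacci pattern of super-blocks in $\of^{2,m}$, which gives $\Theta=\F(W_{A_k},W_{B_k})$. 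Since $\of^{2,m}$ starts with $A_k$, this is a Fibonacci decomposition with no prefix, and $W_{A_k}=\Theta[1,f_{h+3}]$, $W_{B_k}=\Theta[f_{h+3}+1,f_{h+4}]$, as claimed.

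The main obstacle is the context lemma in the second paragraph; everything else is bookkeeping once that is in place, and the argument is directly parallel to the ``next $f_{m+2}$ letters after each block'' step in the proof of Lemma~\ref{L2}.
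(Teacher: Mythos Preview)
Your argument is correct and is essentially the same approach as the paper's: the paper simply observes that $\of^{2,m}$ is a Fibonacci word over blocks of lengths $f_{m+1},f_m$ (Theorem~\ref{P2.5}), notes $n-2f_m+1<f_{h+3}$, and invokes Lemma~\ref{L2}. Your super-block/context-lemma argument is precisely what the proof of Lemma~\ref{L2} does when lifted to level $k=h-m+3$, so you have spelled out in detail the step the paper leaves to that citation (and in particular you handle the fact that the window length $N$ is not literally a Fibonacci number, which Lemma~\ref{L2} as stated assumes).
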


\begin{proof}
By Theorem \ref{P2.5}, $\of^{2,m}$ is a Fibonacci word over $\{\of^{2,m}[1,f_{m+1}],\of^{2,m}[f_{m+1}+1,f_{m+2}]\}$.
On the other hand, $n-2f_m+1<n<2f_{h+1}<f_{h+3}$. Thus by Lemma \ref{L2} the conclusion holds. 
\end{proof}

\subsection{Distinct squares in $\F[i;n]$}\label{Dist}

Since the return word sequence of letter $b$ in $\F$
is $\F(baa,ba)$,
two successional occurrences of letter $b$ have two cases: $baab$ and $bab$.
On the other hand, $\of^{2,m}$ is a Fibonacci word over
$$\{R^{2,m}_a,R^{2,m}_b\}
=\{[1,2,\ldots,f_{m-1}-1,\underbrace{0,\ldots,0}_{f_{m}+1}],[1,2,\ldots,f_{m}]\}.$$
Two successional occurrences of $R^{2,m}_b$ have two cases:
$[R^{2,m}_b,R^{2,m}_a,R^{2,m}_a,R^{2,m}_b]$ and $[R^{2,m}_b,R^{2,m}_a,R^{2,m}_b]$.
This means when $n=|[R^{2,m}_b,R^{2,m}_a,R^{2,m}_a]|=f_{m+3}$,
one of the two cases below holds

\begin{equation*}
\begin{cases}
R^{2,m}_b[1,j]\triangleright\of^{2,m}[i;n]\text{ and }
R^{2,m}_b[j+1,f_m]\triangleleft\of^{2,m}[i;n]\\
\hspace{6cm}\text{ for some integer }j\text{ such that }0\leq j\leq f_m,\\
R^{2,m}_b\prec\of^{2,m}[i;n].
\end{cases}
\end{equation*}
Thus when $n-2f_m+1\geq f_{m+3}$, $\of^{2,m}[i;n-2f_m+1]$ contains all elements in $R^{2,m}_b$.
By Equation (\ref{E2.5}), $R^{2,m}_b$ gives positions of all distinct squares of length $2f_m$.
In this situation $\#\of^{2,m}[i;n-2f_m+1]=f_m$, and
$\F[i;n]$ contains all distinct squares of length $2f_m$.

This means $\#\of^{2,m}[i;n-2f_m+1]=f_m$ for $0\leq m\leq h-3$. So
\begin{equation*}
\mathrm{D}(2,i,n)=\sum\nolimits_{m=0}^{h-3}f_m+\sum\nolimits_{m=h-2}^{h}\#\of^{2,m}[i;n-2f_m+1].
\end{equation*}

Since $\sum_{j=0}^{m} f_{j}=f_{m+2}-2$,
the number of distinct squares in $\F[i;n]$ is equal to
\begin{equation}\label{E3.4}
\mathrm{D}(2,i,n)=\begin{cases}
\#\of^{2,0}[i;n-1],&n=2,3,\\
\#\of^{2,0}[i;n-1]+\#\of^{2,1}[i;n-3],&n=4,5,\\
\sum_{m=h-2}^{h}\#\of^{2,m}[i;n-2f_m+1]+f_{h-1}-2,&n\geq6.
\end{cases}
\end{equation}

Now we turn to simplify $\sum_{m=h-2}^{h}\#\of^{2,m}[i;n-2f_m+1]$ for $2f_h\leq n<2f_{h+1}$.
First we divide $2f_h\leq n<2f_{h+1}$ into five ranges, see Figure \ref{Fig:1}(A).

\medskip

\textbf{Conclusion 1.} By Lemma \ref{L3}, $\{\#\of^{2,m}[i;n-2f_m+1]\}_{i\geq1}$
is a Fibonacci word over $\{\tilde{R}_a,\tilde{R}_b\}$ for fixed $m\in\{h,h-1,h-2\}$ and fixed $n$, where $|\tilde{R}_a|=f_{h+3}$ and $|\tilde{R}_b|=f_{h+2}$.
We must determine the expressions of $\tilde{R}_a$ and $\tilde{R}_b$ for each $m\in\{h,h-1,h-2\}$.

Since the proof is tediously long, we only list the expressions of $R_a$ and $R_b$ in Figure~\ref{Fig:10}.
\begin{equation*}
\begin{cases}
\tilde{R}_a=R_aR_bR_a,\tilde{R}_b=R_aR_b,|R_a|=f_{h+1},|R_b|=f_{h},
&\text{if }n\in \mathrm{I}\cup \mathrm{II}\cup \mathrm{III}(1),\\
\tilde{R}_a=R_aR_b,\tilde{R}_b=R_a,|R_a|=f_{h+2},|R_b|=f_{h+1},&\text{otherwise}.
\end{cases}
\end{equation*}
Thus in all cases above, $\{\#\of^{2,m}[i;n-2f_m+1]\}_{i\geq1}$ is a Fibonacci word over $\{R_a,R_b\}$ without prefix.


\textbf{Conclusion 2.} By Conclusion 1, $\{\sum_{m=h-2}^{h}\#\of^{2,m}[i;n-2f_m+1]\}_{i\geq1}$
is also a Fibonacci word for fixed $n$.
We give the expressions of $R_a$ and $R_b$ in Figure~\ref{Fig:1}(B).
When $n\in \mathrm{I}\cup \mathrm{II}\cup \mathrm{III}$, $|R_a|=f_{h+1}$ and $|R_b|=f_{h}$;
otherwise $|R_a|=f_{h+2}$ and $|R_b|=f_{h+1}$.

\begin{figure}[!t]
\centering
\scriptsize
\setlength{\unitlength}{0.82mm}
\begin{picture}(205,255)
\put(0,245){First we divide Range III into Range III(1) ($2f_h+f_{h-2}-1\leq n<f_{h+2}-1$) and
Range III(2) ($f_{h+2}-1\leq n<f_{h+1}+2f_{h-1}$).}
\put(0,240){\textbf{When} $\mathbf{m=h}$.}
\put(25,240){$n\in \mathrm{I}\cup \mathrm{II}\cup \mathrm{III}(1)$:}
\put(55,240){$R_a=[\underbrace{n-2f_h+1\longrightarrow}_{f_{h+2}-n-1},
\underbrace{n-2f_h\searrow}_{n-2f_h},
\underbrace{0\longrightarrow}_{3f_h-n+1},
\underbrace{1\nearrow}_{n-2f_h}]$;}
\put(168,240){$R_b=[\underbrace{n-2f_h+1\longrightarrow}_{f_{h}}]$.}
\put(0,230){$n\in\mathrm{III}(2)$:}
\put(18,230){$R_a=[\underbrace{f_{h-1}-1\searrow}_{f_{h-1}-1},
\underbrace{0\longrightarrow}_{3f_h-n+1},\underbrace{1\nearrow}_{n-2f_h},
\underbrace{n-2f_h+1\longrightarrow}_{f_{h+2}+f_h-n-1},
\underbrace{n-2f_h\searrow}_{n-f_{h+2}+1}]$;}
\put(124,230){$R_b=[\underbrace{f_{h-1}-1\searrow}_{f_{h-1}-1},
\underbrace{0\longrightarrow}_{3f_h-n+1},\underbrace{1\nearrow}_{f_{h-1}-1},
\underbrace{f_{h-1}-1\longrightarrow}_{n-f_{h+2}+1}]$.}
\put(0,220){$n\in\mathrm{IV}$:}
\put(13,220){$R_a=[\underbrace{f_{h-1}-1\searrow}_{f_{h-1}-1},
\underbrace{0\longrightarrow}_{3f_h-n+1},\underbrace{1\nearrow}_{n-2f_h},
\underbrace{n-2f_h+1\longrightarrow}_{f_{h+2}+f_h-n-1},
\underbrace{n-2f_h\searrow}_{n-f_{h+2}+1}]$;}
\put(124,220){$R_b=[\underbrace{f_{h-1}-1\searrow}_{f_{h-1}-1},
\underbrace{0\longrightarrow}_{3f_h-n+1},
\underbrace{1\nearrow}_{f_{h-1}-2},
\underbrace{f_{h-1}-1\longrightarrow}_{n-f_{h+2}+2}]$.}
\put(0,210){$n\in\mathrm{V}$:}
\put(13,210){$R_a=[\underbrace{f_{h-1}-1\searrow}_{f_{h+2}+f_h-n-1},
\underbrace{n-3f_h\longrightarrow}_{n-3f_h+1},
\underbrace{n-3f_h+1\nearrow}_{4f_h-n-1},
\underbrace{f_h\longrightarrow}_{n-3f_h+f_{h-1}+1},
\underbrace{f_h-1\searrow}_{f_{h-2}}]$;}
\put(13,200){$R_b=[\underbrace{f_{h-1}-1\searrow}_{f_{h+2}+f_h-n-1},
\underbrace{n-3f_h\longrightarrow}_{n-3f_h+1},
\underbrace{n-3f_h+1\nearrow}_{f_{h+2}+f_h-n-2},
\underbrace{f_{h-1}-1\longrightarrow}_{n-f_{h+2}+2}]$.}
\put(0,190){\textbf{When} $\mathbf{m=h-1}$.}
\put(32,190){$n\in \mathrm{I}$:}
\put(45,190){$R_a=[\underbrace{f_{h-2}-1\searrow}_{f_{h+1}+f_{h-1}-n-1},
\underbrace{n-3f_{h-1}\longrightarrow}_{n-3f_{h-1}+1},
\underbrace{n-3f_{h-1}+1\nearrow}_{4f_{h-1}-n-1},
\underbrace{f_{h-1}\longrightarrow}_{n-2f_{h-1}-f_{h-3}+1},
\underbrace{f_{h-1}-1\searrow}_{f_{h-3}}]$;}
\put(45,180){$R_b=[\underbrace{f_{h-2}-1\searrow}_{f_{h+1}+f_{h-1}-n-1},
\underbrace{n-3f_{h-1}\longrightarrow}_{n-3f_{h-1}+1},
\underbrace{n-3f_{h-1}+1\nearrow}_{f_{h+1}+f_{h-1}-n-2},
\underbrace{f_{h-2}-1\longrightarrow}_{n-f_{h+1}+2}]$.}
\put(0,170){$n\in \mathrm{II}\cup\mathrm{III}(1)$:}
\put(25,170){$R_a=[\underbrace{f_{h-2}-1\longrightarrow}_{f_{h+2}-n-1},
\underbrace{f_{h-2}\nearrow}_{f_{h-3}},
\underbrace{f_{h-1}\longrightarrow}_{n-2f_{h-1}-f_{h-3}+1},
\underbrace{f_{h-1}-1\searrow}_{f_{h-3}}]$;}
\put(170,170){$R_b=[\underbrace{f_{h-2}-1\longrightarrow}_{f_{h}}]$.}
\put(0,160){$n\in \mathrm{III}(2)$:}
\put(18,160){$R_a=[\underbrace{n-f_{h+1}-f_{h-1}+1\nearrow}_{f_{h+2}+f_{h-3}-n-1},
\underbrace{f_{h-1}\longrightarrow}_{n-2f_{h-1}-f_{h-3}+1},
\underbrace{f_{h-1}-1\searrow}_{f_{h-3}},
\underbrace{f_{h-2}-1\longrightarrow}_{f_{h+2}+f_{h}-n-1},
\underbrace{f_{h-2}\nearrow}_{n-f_{h+2}+1}]$;}
\put(13,150){$R_b=[\underbrace{n-f_{h+1}-f_{h-1}+1\nearrow}_{f_{h+2}+f_{h-3}-n-1},
\underbrace{f_{h-1}\longrightarrow}_{n-2f_{h-1}-f_{h-3}+1},
\underbrace{f_{h-1}-1\searrow}_{f_{h+2}+f_{h-3}-n-2},
\underbrace{n-f_{h+1}-f_{h-1}+1\longrightarrow}_{n-f_{h+2}+2}]$.}
\put(0,140){$n\in \mathrm{IV}\cup\mathrm{V}$:}
\put(20,140){$R_a=[\underbrace{f_{h-1}\longrightarrow}_{f_{h+1}-f_{h-3}},
\underbrace{f_{h-1}-1\searrow}_{f_{h-3}},
\underbrace{f_{h-2}-1\longrightarrow}_{f_{h+2}+f_{h}-n-1},
\underbrace{f_{h-2}\nearrow}_{f_{h-3}},
\underbrace{f_{h-1}\longrightarrow}_{n-f_{h+2}-f_{h-3}+1}]$;}
\put(170,140){$R_b=[\underbrace{f_{h-1}\longrightarrow}_{f_{h+1}}]$.}
\put(0,130){\textbf{When} $\mathbf{m=h-2}$.}
\put(32,130){$n\in \mathrm{I}$:}
\put(45,130){$R_a=[\underbrace{f_{h-2}\longrightarrow}_{f_{h-1}+f_{h-3}},
\underbrace{f_{h-2}-1\searrow}_{f_{h-4}},
\underbrace{f_{h-3}-1\longrightarrow}_{f_{h+1}+f_{h-1}-n-1},
\underbrace{f_{h-3}\nearrow}_{f_{h-4}},
\underbrace{f_{h-2}\longrightarrow}_{n-f_{h}-2f_{h-2}+1}]$;}
\put(170,130){$R_b=[\underbrace{f_{h-2}\longrightarrow}_{f_{h}}]$.}
\put(0,120){$n\in \mathrm{II}$:}
\put(13,120){$R_a=[\underbrace{f_{h-2}\longrightarrow}_{f_{h-1}+f_{h-3}},
\underbrace{f_{h-2}-1\searrow}_{2f_{h}+f_{h-2}-n-1},
\underbrace{n-2f_{h}+1\longrightarrow}_{n-f_{h+1}-f_{h-1}+1},
\underbrace{n-2f_{h}+1\nearrow}_{2f_{h}+f_{h-2}-n-1},
\underbrace{f_{h-2}\longrightarrow}_{n-f_{h}-2f_{h-2}+1}]$;}
\put(170,120){$R_b=[\underbrace{f_{h-2}\longrightarrow}_{f_{h}}]$.}
\put(0,110){$n\in \mathrm{III}(1)$:}
\put(18,110){$R_a=[\underbrace{f_{h-2}\longrightarrow}_{f_{h+1}}]$;}
\put(48,110){$R_b=[\underbrace{f_{h-2}\longrightarrow}_{f_{h}}]$.}
\put(105,110){$n\in\mathrm{III}(2)\cup\mathrm{IV}\cup\mathrm{V}$:}
\put(140,110){$R_a=[\underbrace{f_{h-2}\longrightarrow}_{f_{h+2}}]$;}
\put(170,110){$R_b=[\underbrace{f_{h-2}\longrightarrow}_{f_{h+1}}]$.}
\end{picture}
\vspace{-8.8cm}
\caption{The expressions of $R_a$ and $R_b$ in Conclusion 1 for $r=2$.\label{Fig:10}}
\end{figure}

\textbf{Conclusion 3.} By Conclusion 2 and Lemma \ref{L1},
\begin{equation*}
\sum_{m=h-2}^{h}\#\of^{2,m}[i;n-2f_m+1]=
\begin{cases}
[R_a,R_b][\hat{i}],&\text{if }n\in \mathrm{I}\cup \mathrm{II}\cup \mathrm{III},\\
[R_a,R_b][\hat{j}],&\text{otherwise}.
\end{cases}
\end{equation*}
Here $\Fib(i-1)=a_ka_{k-1}\ldots a_1a_0$, $\hat{i}=[a_{h+2}a_{h+1}\ldots a_1a_0]_F+1$
and $\hat{j}=[a_{h+3}a_{h+2}\ldots a_1a_0]_F+1$.

By an analogous argument, we simplify Equation (\ref{E3.4}) for $2\leq n\leq5$.
Thus we get Theorem \ref{T2.1}.

\subsection{Distinct squares in $\F[1,n]$}

By analyzing the first number for each $n$ in Figure \ref{Fig:1}(B), we have
\begin{equation*}
\mathrm{D}(2,1,n)-f_{h-1}+2=\begin{cases}
n-2f_{h-1},&2f_h\leq n<f_{h+1}+2f_{h-1}~(\text{Range I}\cup\text{II}\cup\text{III}),\\
f_{h+1}-1,&f_{h+1}+2f_{h-1}\leq n<2f_{h+1}~(\text{Range V}\cup\text{VI}).
\end{cases}
\end{equation*}

Let $Z[n]=\mathrm{D}(2,1,n+1)-\mathrm{D}(2,1,n)$, $Z_1=Z[1,4]$ and $Z_h=Z[2f_h-1,2f_{h+1}-2]$ for $h\geq2$.
Then $\mathrm{D}(2,1,n)=\sum_{j=1}^{n-1}Z[j]$.
We can prove that $Z_1=[0,0,1,0]$ for $h\geq2$
$$Z_h=[\underbrace{1,\ldots,1}_{f_{h-1}+f_{h-3}},\underbrace{0,\ldots,0}_{f_{h-2}}]\text{ and}
\sum_{j=1}^{2f_h-2}Z[j]=f_h+f_{h-2}-3.$$
Thus when $2f_h-1\leq n<2f_{h+1}-1$,
$\mathrm{D}(2,1,n)=\sum_{j=1}^{n-1}Z[j]=\sum_{j=1}^{2f_h-2}Z[j]+\sum_{j=2f_h-1}^{n-1}Z[j]$. So
$$\mathrm{D}(2,1,n)=f_h+f_{h-2}-3+\min\{n-2f_h+1,f_{h-1}+f_{h-3}\}.$$
Thus $\mathrm{D}(2,1,n)=\min\{n-f_{h-1}-2,f_{h+1}+f_{h-1}-3\}$.
Now we obtain Theorem \ref{P2.1}.

\subsection{All positions starting a square}

Harju-K$\ddot{a}$rki-Nowotka \cite{HKN2011} considered the number $N(\w)$ of positions that do not start a square in a binary word $\w$.
Letting $N(n)$ be the maximum of $N(\w)$ for length $|\w|=n$,
they showed that $\lim\frac{N(n)}{n}=\frac{15}{31}$.
As an application of factor spectrum, we have

\begin{property}[]\label{P5.2}\
All positions in $\F$ start a square of length $2f_m$ for some $0\leq m\leq3$.
\end{property}

Define sequence $\mathbb{D}$ by that $\mathbb{D}[i]=\sum_{m=0}^3\of^{2,m}[i]$.
The property is equivalent to prove $\mathbb{D}[i]>0$ for all $i$.
Since all of $\of^{2,m}$ ($0\leq m\leq3$) are Fibonacci words over $\{\of^{2,m}[1,f_4],\of^{2,m}[f_4+1,f_5]\}$, we only need to prove that $\mathbb{D}[i]>0$ for all $1\leq i\leq f_5=13$.

By the expressions of $R^{2,m}_a$ and $R^{2,m}_b$ given in Theorem \ref{P2.5} and Equation (\ref{E2.5}),
\begin{equation*}
\begin{cases}
\of^{2,0}[1,13]=0010000100100,&\of^{2,1}[1,13]=0001200000012,\\
\of^{2,2}[1,13]=1000012310000,&\of^{2,3}[1,13]=1200000012345.
\end{cases}
\end{equation*}
This obtain the conclusion in Property \ref{P5.2}.

\noindent\textbf{Remark.} Let us consider factor spectrum
$\mathcal{S}_{\PP_3}=\{(\w,j)\mid \F[j;2|\w|]=\w\w\}$.
This property means that for all $j\geq1$ there exists a factor $\w$ of length $f_m$ ($0\leq m\leq3$) such that $(\w,j)\in\mathcal{S}_{\PP_3}$.


Furthermore, for all $i\geq1$ there exists $m$ such that $i\leq f_{m-1}-1$. Since
$$R^{2,m}_a=[1,2,\ldots,f_{m-1}-1,\underbrace{0,\ldots,0}_{f_{m}+1}],$$
we have $\of^{2,h}[i]\neq0$ for all $h\geq m$. This means

\begin{property}[]\label{P5.2.1}\
All positions in $\F$ start infinite distinct squares.
\end{property}

\section{Analyses of $\mathrm{D}(2,i,n)$}%

We give the explicit expression of $\mathrm{D}(2,i,n)$
in Theorem \ref{T2.1} and Figure \ref{Fig:1}.
$\mathrm{D}(2,i,n)$ is a function of two variables: $i$ and $n$.
For fixed $n$, $\mathrm{D}(2,i,n)$ and $\mathrm{D}(2,j,n)$ maybe different for $i\neq j$. This means, the numbers of distinct squares in $\F[i;n]$ and $\F[j;n]$
maybe different.
For instance, let $n=16$, $i=2$ and $j=7$. Then $F[2;16]=baababaabaababaa$, $F[7;16]=baabaababaababaa$ and
\begin{equation*}
\begin{cases}
\mathrm{D}(2,2,16)=\#\{aa,abab,baba,abaaba,baabaa,aabaab,
baababaaba,baababaabaababaa\}=8;\\
\mathrm{D}(2,7,16)=\#\{aa,abab,baba,abaaba,baabaa,aabaab,\\
\hspace{2.72cm}abaababaab,baababaaba,aababaabab,ababaababa,babaababaa\}=11.
\end{cases}
\end{equation*}

In this section, we consider some properties of $\mathrm{D}(2,i,n)$ for fixed $n$ and different $i$, such as:
maximum and minimum values, asymptotic properties and property of intermediate values.

\subsection{Maximum and minimum values}

By Theorem \ref{T2.1} and Figure \ref{Fig:1}(B), $\max\limits_{1\leq i\leq\infty}\mathrm{D}(2,i,n)=\max\limits_{1\leq i\leq |R_a|+|R_b|}\mathrm{D}(2,i,n)$.
Thus there exists $i_1$ such that $\mathrm{D}(2,i_1,n)=\max\limits_{1\leq i\leq\infty}\mathrm{D}(2,i,n)$.
Similarly, there exists $i_2$ such that $\mathrm{D}(2,i_2,n)=\min\limits_{1\leq i\leq\infty}\mathrm{D}(2,i,n)$.
In Section \ref{Sec.results} we show that

\begin{equation}\label{max}
\max\limits_{1\leq i\leq\infty}\mathrm{D}(2,i,n)-f_{h-1}+2=
\begin{cases}
f_{h},&n\in\text{ Range I},\\
n-2f_{h-1},&\text{otherwise}.
\end{cases}
\end{equation}
We divide Range I into Range I(1) ($2f_h\leq n<\frac{7}{2}f_{h-1}$) and Range I(2) ($\frac{7}{2}f_{h-1}\leq n<2f_h+f_{h-3}-1$).
Similarly, by Figure \ref{Fig:1}(B) we have
\begin{equation}\label{min}
\min\limits_{1\leq i\leq\infty}\mathrm{D}(2,i,n)-f_{h-1}+2=
\begin{cases}
2n-f_{h+1}-3f_{h-1}+1,&n\in\text{ Range I(1)},\\
f_{h}-f_{h-4}-1,&n\in\text{ Range I(2)},\\
n-f_{h}-f_{h-2}+1,&n\in\text{ Range II},\\
f_{h},&n\in\text{ Range III}\cup \text{IV},\\
n-2f_{h},&n\in\text{ Range V}.\\
\end{cases}
\end{equation}

Thus we get an approximate solution of $D(2,i,n)$: $n-f_{h+1}$. In fact,
for $n\geq2$, let $h\geq0$ such that $2f_h\leq n<2f_{h+1}$,
then $\big|n-f_{h+1}-D(2,i,n)\big|\leq f_{h-1}$ for all $i\geq1$.

\subsection{Asymptotic properties}

Let $\alpha=\frac{\sqrt{5}+1}{2}\doteq1.6180$. A known result is $\lim_{h\rightarrow\infty}\frac{f_{h+1}}{f_h}=\alpha$. We consider six asymptotic values:
\begin{equation*}
\begin{cases}
\overline{Max}:=\overline{\lim}_{n\rightarrow\infty}\dfrac{\max\limits_{1\leq i\leq\infty}\mathrm{D}(2,i,n)}{n},~
&\underline{Max}:=\underline{\lim}_{n\rightarrow\infty}\dfrac{\max\limits_{1\leq i\leq\infty}\mathrm{D}(2,i,n)}{n},\\
\overline{Min}:=\overline{\lim}_{n\rightarrow\infty}\dfrac{\min\limits_{1\leq i\leq\infty}\mathrm{D}(2,i,n)}{n},~
&\underline{Min}:=\underline{\lim}_{n\rightarrow\infty}\dfrac{\min\limits_{1\leq i\leq\infty}\mathrm{D}(2,i,n)}{n},\\
\overline{\Delta}:=\overline{\lim}_{n\rightarrow\infty}
\dfrac{\max\limits_{1\leq i,j\leq\infty}\left|\mathrm{D}(2,i,n)-\mathrm{D}(2,j,n)\right|}{n},
&\underline{\Delta}:=\underline{\lim}_{n\rightarrow\infty}
\dfrac{\max\limits_{1\leq i,j\leq\infty}\left|\mathrm{D}(2,i,n)-\mathrm{D}(2,j,n)\right|}{n}.
\end{cases}
\end{equation*}

By Equation (\ref{max}),

When $n\in$ Range I, $\overline{Max}=\overline{\lim}_{n\rightarrow\infty}\frac{f_{h+1}-2}{n}
=\lim_{h\rightarrow\infty}\frac{f_{h+1}-2}{2f_h}=\frac{\alpha}{2}\doteq0.8090$

\hspace{3.4cm}$\underline{Max}=\underline{\lim}_{n\rightarrow\infty}\frac{f_{h+1}-2}{n}=
\lim_{h\rightarrow\infty}\frac{f_{h+1}-2}{2f_h+f_{h-3}-1}=\frac{\alpha^4}{2\alpha^3+1}\doteq0.7236$

Otherwise, $\frac{\max\limits_{1\leq i\leq\infty}\mathrm{D}(2,i,n)}{n}=\frac{n-f_{h-1}}{n}=1-\frac{f_{h-1}}{n}$. Thus

\hspace{1.9cm}$\overline{Max}=1-\lim_{h\rightarrow\infty}\frac{f_{h-1}}{2f_{h+1}-1}
=1-\frac{1}{2\alpha^2}\doteq0.8090$;

\hspace{1.9cm}$\underline{Max}=1-\lim_{h\rightarrow\infty}\frac{f_{h-1}}{2f_h+f_{h-3}-1}
=1-\frac{\alpha^2}{2\alpha^3+1}\doteq0.7236$.

This means 
$0.7236n\leq \max\limits_{1\leq i\leq\infty}\mathrm{D}(2,i,n)\leq0.8090n$
for $n$ large enough.

By Equation (\ref{min}) and an analogous argument,
\begin{equation*}
\begin{cases}
\underline{Min}=2-\frac{\alpha^2+2}{2\alpha}\doteq0.5729,\hspace{0.5cm}
\overline{Min}=2-\frac{2}{7}(\alpha^2+2)\doteq0.6806,&n\in\text{ Range I(1)},\\
\underline{Min}=\frac{\alpha^5-1}{2\alpha^4+\alpha}\doteq0.6583,\hspace{0.98cm}
\overline{Min}=\frac{2(\alpha^5-1)}{7\alpha^3}\doteq0.6806,&n\in\text{ Range I(2)},\\
\underline{Min}=1-\frac{2\alpha}{2\alpha^3+1}\doteq 0.6583,\hspace{0.35cm}
\overline{Min}=1-\frac{2}{2\alpha^2+1}\doteq0.6793,&n\in\text{ Range II},\\
\underline{Min}=\frac{\alpha}{3}\doteq 0.5393,\hspace{1.7cm}
\overline{Min}=\frac{\alpha^3}{2\alpha^2+1}\doteq0.6793,&n\in\text{ Range III}\cup \text{IV},\\
\underline{Min}=1-\frac{\alpha^2+1}{3\alpha^2}\doteq 0.5393,\hspace{0.5cm}
\overline{Min}=1-\frac{\alpha^2+1}{2\alpha^3}\doteq0.5729,&n\in\text{ Range V}.
\end{cases}
\end{equation*}

Since $\min\{0.5729,0.6583,0.5393\}=0.5393$ and $\max\{0.6806,0.6793,0.5729\}=0.6806$, we have $0.5393n\leq \min\limits_{1\leq i\leq\infty}\mathrm{D}(2,i,n)\leq0.6806n$
for $n$ large enough.


Similarly,
\begin{equation*}
\begin{cases}
\underline{\Delta}=\frac{2\times(\alpha^3+3)}{7}-2\doteq0.0674,\hspace{0.5cm}
\overline{\Delta}=\frac{\alpha^3+3}{2\alpha}-2\doteq0.2361,
&n\in\text{ Range I(1)},\\
\underline{\Delta}=\frac{1}{2\alpha^4+\alpha}\doteq0.0653,\hspace{1.6cm}
\overline{\Delta}=\frac{2}{7\alpha^3}\doteq0.0675,
&n\in\text{ Range I(2)},\\
\underline{\Delta}=\frac{1}{2\alpha^4+2\alpha^2}\doteq0.0528,\hspace{1.3cm}
\overline{\Delta}=\frac{1}{2\alpha^4+\alpha}\doteq0.0653,
&n\in\text{ Range II},\\
\underline{\Delta}=1-\frac{\alpha^3+\alpha}{2\alpha^2+1}\doteq0.0613,\hspace{0.97cm}
\overline{\Delta}=1-\frac{\alpha^2+1}{3\alpha}\doteq0.2547,
&n\in\text{ Range III}\cup \text{IV},\\
\underline{\Delta}=\frac{1}{\alpha^3}\doteq0.2361,\hspace{2.16cm}
\overline{\Delta}=\frac{2}{3\alpha^2}\doteq0.2547,
&n\in\text{ Range V}.
\end{cases}
\end{equation*}
Since $\min\{0.0674,0.0653,0.0528,0.0613,0.2361\}=0.0528$ and
$\max\{0.2361,0.0675,0.0653,0.2547\}=0.2547$, we have
$0.0528n\leq  \max\limits_{1\leq i,j\leq\infty}\big|\mathrm{D}(2,i,n)-\mathrm{D}(2,j,n)\big|
\leq0.2547n$ for $n$ large enough.

\subsection{Property of intermediate values}

By Figure \ref{Fig:1}(B) and Theorem \ref{T2.1}, 
$\big|D(2,i+1,n)-D(2,i,n)\big|\leq 1$. Thus for fixed $n$ and any $M$ such that $\max\limits_{1\leq i\leq\infty}\mathrm{D}(2,i,n)\leq M\leq\min\limits_{1\leq i\leq\infty}\mathrm{D}(2,i,n)$,
there exists $i_0$ such that $D(2,i_0,n)=M$.

\section{The number of r-powers in $\F[i;n]$}\label{Sec-r}

First of all, we prove some facts, which are fit for all sequence including $\F$.

Fact 1. If both $P$ and $P+1$ are the positions of some squares of size $n$, then $P$ is the position of some $(2+\frac{1}{n})$-powers of size $n$.
In fact, since $P$ is the position of some squares of size $n$, $\F[P,P+2n-1]$ has expression $x_1x_2\cdots x_nx_1x_2\cdots x_n$ for $x_i\in\{a,b\}$. Since $P+1$ is the position of some squares of size $n$ too, $\F[P+2n]=x_1$. This means $\F[P,P+2n]=x_1x_2\cdots x_nx_1x_2\cdots x_nx_1$.
Thus $P$ is the position of the $(2+\frac{1}{n})$-power factor $\F[P,P+2n]$.

Fact 2. If both $P$ and $P+1$ are the positions of some $(2+\frac{i}{n})$-powers of size $n$, then $P$ is the position of some $(2+\frac{i+1}{n})$-powers of size $n$ for $i\geq0$.

Fact 3. If each integer in set $\{P,P+1,\ldots,P+h\}$ is the position of some $(2+\frac{i}{n})$-powers of size $n$, then each integer in 
$\{P,P+1,\ldots,P+h-j\}$ is the position of some $(2+\frac{i+j}{n})$-powers of size $n$ for $h\geq j$. 
Now we consider the sequences $\of^{r,m}$. Let $i\geq0$ and $n,h\geq1$ such that
\begin{equation}\label{E5.1}
\begin{split}
&\of^{2+\frac{i}{f_m},m}[n;h+2]=[0,1,2,\ldots,h,0]\\
\text{then }&
\of^{2+\frac{i+j}{f_m},m}[n;h+2]
=[0,1,2,\ldots,h-j,\underbrace{0,\ldots,0}_{j+1}]
\end{split}
\end{equation}
for $0<j\leq h$.
For real number $r\geq2$, $r$-power $\w^r$ of size $|\w|$ means $h$-power $\w^h$, where $h=\frac{\lceil r|\w|\rceil}{|\w|}$.
So we can generalize the conclusion above to all real number $r\geq2$.

\bigskip

We first generalize the definition of $\of^{2,m}$ (see Definition \ref{D3.4}) to
$\of^{r,m}$ as below.

\begin{definition}[]\label{D6.1}
We define sequence $\of^{r,m}$ ($r\geq2$ and $m\geq0$) that for $i\geq1$ and $1\leq j\leq f_{m}$
\begin{equation*}
\of^{r,m}[i]=
\begin{cases}
j,&\text{if }\of^{2,m}[i]=j\text{ and }\F[i;\lceil rf_m\rceil]\text{ is an }r\text{-power};\\
0,&\text{otherwise.}
\end{cases}
\end{equation*}
We call $\of^{r,m}$ the position sequence of $r$-powers of size $f_m$ for $m\geq0$.
\end{definition}

By Fact 3 above and Theorem \ref{P2.5}, we have

\begin{property}[]\label{P5.1}\
$\of^{r,m}$ is a Fibonacci word over 
$\{R^{r,m}_a,R^{r,m}_b\}$ for $r\geq2$ and $m\geq0$. Specifically,

\emph{(1)} $\of^{r,0}$  is a zero sequence for $r>2$;

\emph{(2)} $\of^{r,1}$ is a zero sequence for $r>2.5$;
$R^{r,1}_a=[0,0,0]$ and $R^{r,1}_b=[1,0]$ for $2<r\leq2.5$;

\emph{(3)} $\of^{r,m}$ $(m\geq2)$ is a zero sequence for $r>3+\frac{f_{m-1}-1}{f_m}$; and
\begin{equation*}
\begin{cases}
R^{r,m}_a=[1,2,\ldots,f_{m-1}-x,\underbrace{0,\ldots,0}_{f_{m}+x}],~
R^{r,m}_b=[1,2,\ldots,f_{m}],
&2\leq r\leq2+\frac{f_{m-1}-1}{f_m}\\
R^{r,m}_a=[\underbrace{0,\ldots,0}_{f_{m+1}}],~
R^{r,m}_b=[1,2,\ldots,f_{m}-y,\underbrace{0,\ldots,0}_{y}],
&2<r-\frac{f_{m-1}-1}{f_m}\leq 3.
\end{cases}
\end{equation*}
Here $x=\lceil rf_m\rceil-2f_m+1$ and $y=\lceil rf_m\rceil-f_{m+2}+1$.
\end{property}

\noindent\textbf{Remark.} Given a sequence $\rho$, it is an interesting and challenging task to determine its critical exponent $e$, such that $\rho$ contains $r$-powers for all $r<e$, but has no $r$-powers for all $r>e$ \cite{AS2003}.
The critical exponent of $\F$ is $\lim_{m\rightarrow\infty} 3+\frac{f_{m-1}-1}{f_m}=3+\frac{1}{\alpha}\approx3.618$, where $\alpha=\frac{1+\sqrt{5}}{2}$. This is
an immediate corollary of Property \ref{P5.1}.

\bigskip

Now we turn to consider two special cases: $r=2+\epsilon$ and $r=3$.

When $r=2+\epsilon$,
$\of^{r,0}$ is a zero sequence;
$R^{r,1}_a=[0,0,0]$ and $R^{r,1}_b=[1,0]$;
for $m\geq2$
\begin{equation}\label{r2e}
R^{r,m}_a=[1,2,\ldots,f_{m-1}-2,\underbrace{0,\ldots,0}_{f_{m}+2}]\text{ and }
R^{r,m}_b=[1,2,\ldots,f_{m}].
\end{equation}
When $r=3$, both $\of^{r,0}$ and $\of^{r,1}$ are zero sequences;
for $m\geq2$
\begin{equation}\label{r3}
R^{r,m}_a=[\underbrace{0,\ldots,0}_{f_{m+1}}]\text{ and }
R^{r,m}_b=[1,2,\ldots,f_{m-1}-1,\underbrace{0,\ldots,0}_{f_{m-2}+1}].
\end{equation}

\subsection{Distinct r-powers for $r=2+\epsilon$}

For $n\geq9$, let $h\geq2$ such that $2f_h\leq n<2f_{h+1}$.
Then
$$\mathrm{D}(2+\epsilon,1,n)=\min\{n-f_{h-1}-6,f_{h+1}+f_{h-1}-6\}.$$

By Equation (\ref{r2e}), $\#\of^{2+\epsilon,1}[i;n-2f_1]=1$ and $\#\of^{2+\epsilon,m}[i;n-2f_m]=f_m$ for $2\leq m\leq h-3$.
Thus $\sum\nolimits_{m=1}^{h-3}\#\of^{2,m}[i;n-2f_m]=\sum\nolimits_{m=2}^{h-3}f_m+1=f_{h-1}-4$.
For $i\geq1$ and $n\geq5$, let $h\geq1$ such that $2f_h+1\leq n<2f_{h+1}+1$.
Then 
\begin{equation*}
\mathrm{D}(2+\varepsilon,i,n)=\begin{cases}
[0,0,n-5,1,0,0,0,0],&n=5,6,\\
[n-7,1,1,1,n-7,n-6,n-6,1,n-7,1,1,1,0][\hat{j}],&n=7,8,\\
[1,1,n-8,n-7,n-7,3,n-7,2,1,1,1,1,1][\hat{j}],&n=9,10,\\
[R_a,R_b][\hat{i}]+f_{h-1}-4,&\hspace{-3.8cm}n\geq11\text{ and }n\in \mathrm{I}\cup \mathrm{II}\cup \mathrm{III}\cup \mathrm{IV},\\
[R_a,R_b][\hat{j}]+f_{h-1}-4,&\hspace{-0.3cm}otherwise.
\end{cases}
\end{equation*}
Here $\Fib(i-1)=a_ka_{k-1}\ldots a_1a_0$, $\hat{i}=[a_{h+1}a_{h}\ldots a_1a_0]_F+1$
and $\hat{j}=[a_{h+2}a_{h+1}\ldots a_1a_0]_F+1$.
The definitions of Ranges I to VI, and the expressions of $R_a$ and $R_b$
see Figure~\ref{Fig:2}.

\begin{property}[]\
All positions in $\F$ starting a ($2+\epsilon$)-power of size $f_m$ for some $2\leq m\leq6$.
\end{property}

The proof could be obtained by an analogous argument in Property \ref{P5.2}.
Let us consider factor spectrum
$\mathcal{S}_{\PP_4}=\{(\w,j)\mid \F[j;2|\w|+1]=\w\w\cdot\w[1]\}$.
This property means that for all $j\geq1$ there exists a factor $\w$ such that $(\w,j)\in\mathcal{S}_{\PP_4}$.

\subsection{Distinct r-powers for $r=3$}

For $n\geq14$, let $h\geq3$ such that $2f_h-1\leq n<2f_{h+1}-1$.
Then
$$\mathrm{D}(3,1,n)=\max\{n-f_{h+1}-f_{h-1}-h+1,f_{h-1}-h\}.$$
By Equation (\ref{r3}), $\#\of^{2+\epsilon,m}[i;n-3f_m]=f_{m-1}-1$ for $2\leq m\leq h-3$.
Thus $$\sum\nolimits_{m=2}^{h-3}\#\of^{2,m}[i;n-3f_m]
=\sum\nolimits_{m=2}^{h-3}(f_{m-1}+1)=f_{h-1}+h-9.$$
For $i\geq1$ and $n\geq9$, let $h\geq2$ such that  $3f_h\leq n<3f_{h+1}$.
Then 
\begin{equation*}
\mathrm{D}(3,i,n)=\begin{cases}
[\underbrace{0,\ldots,0}_{14-n},\underbrace{1,\ldots,1}_{n-8},
\underbrace{0,\ldots,0}_{7},][\hat{i}],&9\leq n\leq 14,\\
[R_a,R_b][\hat{i}]+f_{h-1}+h-9,&n\geq15\text{ and }n\not\in\text{Range }\mathrm{X},\\
[R_a,R_b][\hat{j}]+f_{h-1}+h-9,&otherwise.
\end{cases}
\end{equation*}
Here $\Fib(i-1)=a_ka_{k-1}\ldots a_1a_0$, $\hat{i}=[a_{h+2}a_{h}\ldots a_1a_0]_F+1$
and $\hat{j}=[a_{h+3}a_{h+1}\ldots a_1a_0]_F+1$.
The definitions of Ranges I to X, and the expressions of $R_a$ and $R_b$
see Figure~\ref{Fig:3}.

\begin{figure}[!ht]
\centering
\scriptsize
\setlength{\unitlength}{0.82mm}
\begin{picture}(205,14)
\put(0,10){Divide $2f_h+1\leq n<2f_{h+1}+1$ into six ranges:}
\put(0,5){Range I: $2f_h+1\leq n<2f_h+f_{h-3}$;}
\put(70,5){Range II: $n=2f_h+f_{h-3}$;}
\put(120,5){Range III: $2f_h+f_{h-3}+1\leq n<2f_h+f_{h-2}$;}
\put(0,0){Range IV: $2f_h+f_{h-2}\leq n<f_{h+1}+2f_{h-1}+1$;}
\put(78,0){Range V: $f_{h+1}+2f_{h-1}+1\leq n<3f_h+1$;}
\put(150,0){Range VI: $3f_h+1\leq n\leq 2f_{h+1}$.}
\end{picture}
\begin{picture}(205,131)
\put(0,125){$n\in\mathrm{I}$:}
\put(13,125){$R_a=[\underbrace{n-f_{h}-f_{h-3}-2\searrow}_{f_{h+1}+f_{h-1}-n},
\underbrace{2n-f_{h+1}-3f_{h-1}-2\longrightarrow}_{n-3f_{h-1}-1},
\underbrace{2n-f_{h+1}-3f_{h-1}-1\nearrow}_{f_{h+1}+f_{h-1}-n-1},
\underbrace{n-f_{h}-f_{h-3}-2\longrightarrow}_{n-2f_{h}+1},$}
\put(23.5,115){$\underbrace{n-f_{h}-f_{h-3}-1\nearrow}_{f_{h+1}+f_{h-1}-n+1},
\underbrace{f_{h}\longrightarrow}_{n-2f_{h}-1},
\underbrace{f_{h}-1\searrow}_{f_{h-4}+1},
\underbrace{f_{h-1}+f_{h-3}-2\longrightarrow}_{f_{h+1}+f_{h-1}-n-1},
\underbrace{f_{h-1}+f_{h-3}-1\nearrow}_{f_{h-4}+1},$}
\put(23.5,105){$\underbrace{f_{h}\longrightarrow}_{n-2f_{h}-1},
\underbrace{f_{h}-1\searrow}_{f_{h+1}+f_{h-1}-n+2},
\underbrace{n-f_{h}-f_{h-3}-2\longrightarrow}_{n-2f_{h}-1}]$;}
\put(13,95){$R_b=[\underbrace{n-f_{h}-f_{h-3}-2\searrow}_{f_{h+1}+f_{h-1}-n},
\underbrace{2n-f_{h+1}-3f_{h-1}-2\longrightarrow}_{n-3f_{h-1}-1},
\underbrace{2n-f_{h+1}-3f_{h-1}-1\nearrow}_{f_{h+1}+f_{h-1}-n-1},
\underbrace{n-f_{h}-f_{h-3}-2\longrightarrow}_{n-f_{h+1}+2}]$.}
\put(0,85){$n\in\mathrm{II}$:}
\put(13,85){$R_a=[\underbrace{f_{h}-2\longrightarrow}_{f_{h-1}-1},f_{h}-1,
\underbrace{f_{h}\longrightarrow}_{f_{h-3}-1},
\underbrace{f_{h}-1\searrow}_{f_{h-4}-1},
\underbrace{f_{h}-f_{h-4}\longrightarrow}_{3},
\underbrace{f_{h}-f_{h-4}+1\nearrow}_{f_{h-4}-1},
\underbrace{f_{h}\longrightarrow}_{f_{h-3}-1},
f_{h}-1,\underbrace{f_{h}-2\longrightarrow}_{f_{h-3}}$];}
\put(0,75){$n\in\mathrm{III}$:}
\put(13,75){$R_a=[\underbrace{n-f_{h}-f_{h-3}-2\longrightarrow}_{f_{h+1}+2f_{h-1}-n},
\underbrace{n-f_{h}-f_{h-3}-2\searrow}_{n-f_{h+1}-f_{h-1}-1},
\underbrace{f_{h}\longrightarrow}_{f_{h-3}},
\underbrace{f_{h}-1\searrow}_{2f_{h}+f_{h-2}-n-1},
\underbrace{n-f_{h}-f_{h-2}\longrightarrow}_{n-f_{h+1}-f_{h-1}+3},
\underbrace{n-f_{h}-f_{h-2}+1\nearrow}_{2f_{h}+f_{h-2}-n-1},$}
\put(23.5,65){$\underbrace{f_{h}\longrightarrow}_{f_{h-3}},
\underbrace{f_{h}\nearrow}_{n-f_{h+1}-f_{h-1}-1},
\underbrace{n-f_{h}-f_{h-3}-2\longrightarrow}_{f_{h-3}+1}]$;}
\put(120,65){$n\in\mathrm{II}\cup\mathrm{III}\cup\mathrm{IV}$:}
\put(150,65){$R_b=[\underbrace{n-f_{h}-f_{h-3}-2\longrightarrow}_{f_{h}}]$.}
\put(0,55){$n\in\mathrm{IV}$:}
\put(13,55){$R_a=[\underbrace{n-f_{h}-f_{h-3}-2\longrightarrow}_{f_{h+1}+2f_{h-1}-n},
\underbrace{n-f_{h}-f_{h-3}-2\searrow}_{n-f_{h+1}-f_{h-1}-1},
\underbrace{f_{h}\longrightarrow}_{3f_{h}-n+1},
\underbrace{f_{h}\nearrow}_{n-f_{h+1}-f_{h-1}-1},
\underbrace{n-f_{h}-f_{h-3}-2\longrightarrow}_{f_{h-3}+1}]$;}
\put(0,45){$n\in \mathrm{V}$:}
\put(13,45){$R_a=[\underbrace{f_{h+1}-2\searrow}_{f_{h-1}-1},
\underbrace{f_{h}\longrightarrow}_{f_{h+2}+f_{h-2}-n+1},
\underbrace{f_{h}\nearrow}_{n-f_{h+1}-f_{h-1}-2},
\underbrace{n-f_{h}-f_{h-3}-2\longrightarrow}_{n-f_{h+3}+f_{h-4}-3},
\underbrace{n-f_{h}-f_{h-3}-3\searrow}_{n-f_{h+2}-f_{h-3}-1}]$;}
\put(13,35){$R_b=[\underbrace{f_{h+1}-2\searrow}_{f_{h-1}-1},
\underbrace{f_{h}\longrightarrow}_{f_{h+2}+f_{h-2}-n+1},
\underbrace{f_{h}\nearrow}_{f_{h-1}-2},
\underbrace{f_{h+1}-2\longrightarrow}_{n-f_{h+2}+2}$.}
\put(0,25){$n\in \mathrm{VI}$:}
\put(13,25){$R_a=[\underbrace{f_{h+1}-2\searrow}_{f_{h+2}+f_{h}-n},
\underbrace{n-2f_{h}-2\longrightarrow}_{n-f_{h+2}-f_{h-2}-1},
\underbrace{n-2f_{h}-1\nearrow}_{2f_{h-2}-1},
\underbrace{n-f_{h}-f_{h-3}-2\longrightarrow}_{f_{h+2}+f_{h-1}-n+3},
\underbrace{n-f_{h}-f_{h-3}-3\searrow}_{n-f_{h+2}-f_{h-2}-1},
\underbrace{f_{h+1}+f_{h-4}-2\longrightarrow}_{f_{h+2}+f_{h}-n-1},$}
\put(23.5,15){$\underbrace{f_{h+1}+f_{h-4}-1\nearrow}_{n-f_{h+2}-f_{h-2}-1},
\underbrace{n-f_{h}-f_{h-3}-2\longrightarrow}_{f_{h+2}+f_{h-1}-n+3},
\underbrace{n-f_{h}-f_{h-3}-3\searrow}_{n-f_{h+2}-f_{h-3}-1}$,}
\put(13,5){$R_b=[\underbrace{f_{h+1}-2\searrow}_{f_{h+2}+f_{h}-n},
\underbrace{n-2f_{h}-2\longrightarrow}_{n-f_{h+2}-f_{h-2}-1},
\underbrace{n-2f_{h}-1\nearrow}_{f_{h+2}+f_{h}-n-1},
\underbrace{f_{h+1}-2\longrightarrow}_{n-f_{h+2}+2}]$.}
\end{picture}
\caption{The expressions of $R_a$ and $R_b$ for $r=2+\epsilon$.
$|R_a|=f_{h+1}$ and $|R_b|=f_{h}$ for $n\in \mathrm{I}\cup \mathrm{II}\cup \mathrm{III}\cup \mathrm{IV}$;
$|R_a|=f_{h+2}$ and $|R_b|=f_{h+1}$ otherwise.\label{Fig:2}}
\end{figure}

\begin{figure}[!ht]
\centering
\scriptsize
\setlength{\unitlength}{0.82mm}
\begin{picture}(205,30)
\put(0,25){Divide $3f_h\leq n<3f_{h+1}$ into ten ranges:}
\put(0,20){Range I: $3f_h\leq n<f_{h+2}+2f_{h-3}+1$;}
\put(100,20){Range II: $f_{h+2}+2f_{h-3}+1\leq n<f_{h+2}+f_{h-1}$;}
\put(0,15){Range III: $f_{h+2}+f_{h-1}\leq n<f_{h+1}+3f_{h-1}-1$;}
\put(100,15){Range IV: $f_{h+1}+3f_{h-1}-1\leq n<f_{h+2}+f_{h}-1$;}
\put(0,10){Range V: $f_{h+2}+f_{h}-1\leq n<4f_{h}-f_{h-4}$;}
\put(100,10){Range VI: $4f_{h}-f_{h-4}\leq n<f_{h+1}+4f_{h-1}$;}
\put(0,5){Range VII: $f_{h+1}+4f_{h-1}\leq n<f_{h+3}-1$;}
\put(100,5){Range VIII: $f_{h+3}-1\leq n<f_{h+2}+3f_{h-1}-1$;}
\put(0,0){Range IX: $f_{h+2}+3f_{h-1}-1\leq n<f_{h+1}+3f_{h}+1$;}
\put(100,0){Range X: $f_{h+1}+3f_{h}+1\leq n<3f_{h+1}$.}
\end{picture}
\begin{picture}(205,175)
\put(0,170){$n\in\mathrm{I}\cup\mathrm{II}$:}
\put(20,170){$R_a=[\underbrace{n-f_{h+2}\nearrow}_{f_{h+2}+f_{h-1}-n-1}
\underbrace{f_{h-1}-2\longrightarrow}_{n-f_{h+1}-f_{h-1}+1}
\underbrace{f_{h-1}-2\searrow}_{f_{h-2}}
\underbrace{f_{h-3}-1\longrightarrow}_{f_{h+2}+f_{h-1}-n}
\underbrace{f_{h-3}\nearrow}_{n-3f_{h}+1}
\underbrace{n-3f_{h}+f_{h-3}\longrightarrow}_{f_{h+2}+f_{h}-n-2}$}
\put(33.5,160){$\underbrace{n-3f_{h}+f_{h-3}-1\searrow}_{n-3f_{h}+1}
\underbrace{f_{h-3}-1\longrightarrow}_{f_{h+2}+f_{h-1}-n}
\underbrace{f_{h-3}\nearrow}_{n-f_{h+1}-2f_{h-1}}]$;}
\put(0,150){$n\in\mathrm{I}$:}
\put(13,150){$R_b=[\underbrace{n-f_{h+2}\nearrow}_{f_{h+2}+f_{h-1}-n-1}
\underbrace{f_{h-1}-2\longrightarrow}_{n-f_{h+1}-f_{h-1}+2}
\underbrace{f_{h-1}-3\searrow}_{f_{h-2}-2}
\underbrace{f_{h-3}-1\longrightarrow}_{5f_{h-1}-n+1}
\underbrace{f_{h-3}\nearrow}_{n-3f_{h}+f_{h-4}}]$;}
\put(0,140){$n\in\mathrm{II}$:}
\put(13,140){$R_b=[\underbrace{n-f_{h+2}\nearrow}_{f_{h+2}+f_{h-1}-n-1}
\underbrace{f_{h-1}-2\longrightarrow}_{n-f_{h+1}-f_{h-1}+2}
\underbrace{f_{h-1}-3\searrow}_{f_{h+1}+3f_{h-1}-n-1}
\underbrace{n-f_{h+1}-2f_{h-1}-1\longrightarrow}_{n-5f_{h-1}-1}
\underbrace{n-f_{h+1}-2f_{h-1}-1\nearrow}_{f_{h-3}+1}]$;}

\put(0,130){$n\in\mathrm{III}\cup\mathrm{IV}$:}
\put(20,130){$R_a=[\underbrace{f_{h-1}-2\longrightarrow}_{f_{h}}
\underbrace{f_{h-1}-2\searrow}_{f_{h+2}+f_{h}-n}
\underbrace{n-3f_{h}-1\longrightarrow}_{n-f_{h+2}-f_{h-1}}
\underbrace{n-3f_{h}\nearrow}_{f_{h-3}+1}
\underbrace{n-2f_{h}-2f_{h-2}\longrightarrow}_{f_{h+2}+f_{h}-n-2}
\underbrace{n-2f_{h}-2f_{h-2}-1\searrow}_{f_{h-3}+1}$}
\put(30.5,120){$\underbrace{n-3f_{h}-1\longrightarrow}_{n-f_{h+2}-f_{h-1}}
\underbrace{n-3f_{h}\nearrow}_{f_{h+2}+f_{h}-n-2}
\underbrace{f_{h-1}-2\longrightarrow}_{n-f_{h+2}-f_{h-1}+2}]$;}
\put(0,110){$n\in\mathrm{III}$:}
\put(13,110){$R_b=[\underbrace{f_{h-1}-2\longrightarrow}_{f_{h}+1}
\underbrace{f_{h-1}-3\searrow}_{f_{h+1}+3f_{h-1}-n-1}
\underbrace{n-f_{h+1}-2f_{h-1}-1\longrightarrow}_{n-5f_{h-1}-1}
\underbrace{n-f_{h+1}-2f_{h-1}-1\nearrow}_{f_{h+1}+3f_{h-1}-n-1}
\underbrace{f_{h-1}-2\longrightarrow}_{n-f_{h+2}-f_{h-1}+2}]$;}
\put(0,100){$n\in\mathrm{V}$:}
\put(13,100){$R_a=[\underbrace{f_{h-1}-2\longrightarrow}_{f_{h+2}+2f_{h}-n-1}
\underbrace{f_{h-1}-2\nearrow}_{n-f_{h+2}-f_{h}+1}
\underbrace{n-3f_{h}-1\longrightarrow}_{f_{h-2}}
\underbrace{n-3f_{h}\nearrow}_{2f_{h+1}+f_{h-1}-n-1}
\underbrace{f_{h-1}+f_{h-3}-2\longrightarrow}_{n-f_{h+2}-f_{h}+2}
\underbrace{f_{h-1}+f_{h-3}-3\searrow}_{2f_{h+1}+f_{h-1}-n-1}$}
\put(23.5,90){$\underbrace{n-3f_{h}-1\longrightarrow}_{f_{h-2}-1}
\underbrace{n-3f_{h}-2\searrow}_{n-f_{h+2}-f_{h}+1}
\underbrace{f_{h-1}-2\longrightarrow}_{f_{h-2}}]$;}
\put(115,90){$n\in\mathrm{IV\cup V\cup\ldots\cup IX}$:
$R_b=[\underbrace{f_{h-1}-2\longrightarrow}_{f_{h+1}}]$;}
\put(0,80){$n\in\mathrm{VI}$:}
\put(13,80){$R_a=[\underbrace{f_{h-1}-2\longrightarrow}_{f_{h+2}+2f_{h}-n}
\underbrace{f_{h-1}-2\nearrow}_{n-f_{h+2}-f_{h}}
\underbrace{n-3f_{h}-1\longrightarrow}_{f_{h+3}-n-2}
\underbrace{n-3f_{h}-1\searrow}_{n-2f_{h+1}-f_{h-1}+2}
\underbrace{f_{h-1}+f_{h-3}-2\longrightarrow}_{f_{h+1}+4f_{h-1}-n}
\underbrace{f_{h-1}+f_{h-3}-1\nearrow}_{n-2f_{h+1}-f_{h-1}+1}$}
\put(23.5,70){$\underbrace{n-3f_{h}-1\longrightarrow}_{f_{h+3}-n-2}
\underbrace{n-3f_{h}-2\searrow}_{n-f_{h+2}-f_{h}+1}
\underbrace{f_{h-1}-2\longrightarrow}_{f_{h-2}}]$;}

\put(0,60){$n\in\mathrm{VII}$:}
\put(15,60){$R_a=[\underbrace{f_{h-1}-2\longrightarrow}_{f_{h+2}+2f_{h}-n}
\underbrace{f_{h-1}-1\nearrow}_{n-f_{h+2}-f_{h}}
\underbrace{n-3f_{h}-1\longrightarrow}_{f_{h+3}-n-2}
\underbrace{n-3f_{h}-1\searrow}_{f_{h-3}+2}
\underbrace{n-2f_{h+1}-2\longrightarrow}_{n-f_{h+1}-4f_{h-1}}
\underbrace{n-2f_{h+1}-1\nearrow}_{f_{h-3}+1}$}
\put(25.5,50){$\underbrace{n-3f_{h}-1\longrightarrow}_{f_{h+3}-n-2}
\underbrace{n-3f_{h}-2\searrow}_{n-f_{h+2}-f_{h}+1}
\underbrace{f_{h-1}-2\longrightarrow}_{f_{h-2}}]$;}
\put(0,40){$n\in\mathrm{VIII}$:}
\put(15,40){$R_a=[\underbrace{f_{h-1}-2\longrightarrow}_{f_{h+2}+2f_{h}-n}
\underbrace{f_{h-1}-1\nearrow}_{f_{h-1}-2}
\underbrace{2f_{h-1}-3\longrightarrow}_{n-f_{h+3}+2}
\underbrace{2f_{h-1}-3\searrow}_{f_{h+2}+3f_{h-1}-n-1}
\underbrace{n-f_{h+2}-f_{h-1}-2\longrightarrow}_{n-f_{h+1}-4f_{h-1}+1}$}
\put(25.5,30){$\underbrace{n-f_{h+2}-f_{h-2}-1\nearrow}_{f_{h+3}+f_{h-3}-n-2}
\underbrace{2f_{h-1}-3\longrightarrow}_{n-f_{h+3}+3}
\underbrace{2f_{h-1}-4\searrow}_{f_{h-1}-1}
\underbrace{f_{h-1}-2\longrightarrow}_{f_{h-2}}]$;}
\put(0,20){$n\in\mathrm{IX}$:}
\put(13,20){$R_a=[\underbrace{f_{h-1}-2\longrightarrow}_{f_{h+2}+2f_{h}-n}
\underbrace{f_{h-1}-1\nearrow}_{f_{h-1}-2}
\underbrace{2f_{h-1}-3\longrightarrow}_{n-f_{h+2}-f_{h}+3}
\underbrace{2f_{h-1}-4\searrow}_{f_{h-1}-1}
\underbrace{f_{h-1}-2\longrightarrow}_{f_{h-2}}]$;}
\put(0,10){$n\in\mathrm{X}$:}
\put(13,10){$R_a=[\underbrace{n-4f_{h}-1\nearrow}_{f_{h+3}+f_{h}-n-2}
\underbrace{2f_{h-1}-3\longrightarrow}_{n-f_{h+2}-f_{h}+3}
\underbrace{2f_{h-1}-4\searrow}_{f_{h-1}-2}
\underbrace{f_{h-1}-2\longrightarrow}_{f_{h+1}+5f_{h}-n+1}
\underbrace{f_{h-1}-1\nearrow}_{n-f_{h+1}-3f_{h}}]$;}
\put(13,0){$R_b=[\underbrace{n-4f_{h}-1\nearrow}_{f_{h+3}+f_{h}-n-2}
\underbrace{2f_{h-1}-3\longrightarrow}_{n-f_{h+2}-f_{h}+3}
\underbrace{2f_{h-1}-4\searrow}_{f_{h-1}-2}
\underbrace{f_{h-1}-2\longrightarrow}_{5f_{h}-n+1}
\underbrace{f_{h-1}-1\nearrow}_{n-f_{h+1}-3f_{h}}]$.}
\end{picture}
\vspace{0.2cm}
\caption{The expressions of $R_a$ and $R_b$ for $r=3$.
$|R_a|=f_{h+1}$ and $|R_b|=f_{h}$ for $n\not\in$ Range $\mathrm{X}$;
$|R_a|=f_{h+2}$ and $|R_b|=f_{h+1}$ otherwise.\label{Fig:3}}
\end{figure}

There exists some positions that do not start a cube.
In fact, $\F[1,n]$ is not a cube for all $n\geq1$.

\begin{property}[]\
For $m\geq0$ the number of positions starting a cube in $\F[1,f_{m}]$ is
\begin{equation*}
C(m)=\begin{cases}
f_{2k}-k^2-k-1,&m=2k,\\
f_{2k+1}-k^2-2k-2,&m=2k+1.
\end{cases}
\end{equation*}
\end{property}

\begin{proof} 
We only need to prove the relation below by induction
$$C(m)=C(m-1)+C(m-2)-C(m-3)+\#R_b^{3,m-2}.$$
Here $\#R_b^{3,m-2}=\#[1,2,\ldots,f_{m-3}-1,\underbrace{0,\ldots,0}_{f_{m-4}+1}]=f_{m-3}-1$.
We give this relation in Figure \ref{Fig:14} for $m=7$.
\begin{figure}[!t]
\centering
\scriptsize
\setlength{\unitlength}{0.82mm}
\begin{picture}(180,73)
\put(21.6,68){$i$}
\put(25,68){=}
\put(30,68){${\begin{array}{*{34}{p{0.3cm}}}
1&2&3&4&5&6&7&8&9&10&11&12&13&14&15&16&17&18&19&20&21&22&23&24&25&26&27&28&29&30&31&32&33& 34\end{array}}$}
\put(6.2,46){$\of^{2,2}[1,26]$}
\put(25,46){=}
\put(30,46){${\begin{array}{*{34}{p{0.3cm}}}
0&0&0&0&0&1&0&0&0&0&0&0&0&0&0&0&0&0&1&0&0&0&0&0&0&0&1&0&0&0&0&0&0&0\end{array}}$}
\put(6.2,40){$\of^{2,3}[1,26]$}
\put(25,40){=}
\put(30,40){${\begin{array}{*{34}{p{0.3cm}}}
0&0&0&0&0&0&0&0&1&2&0&0&0&0&0&0&0&0&0&0&0&0&0&0&0&0&0&0&0&1&2&0&0&0\end{array}}$}
\put(6.2,34){$\of^{2,4}[1,26]$}
\put(25,34){=}
\put(30,34){${\begin{array}{*{34}{p{0.3cm}}}
0&0&0&0&0&0&0&0&0&0&0&0&0&1&2&3&4&0&0&0&0&0&0&0&0&0&0&0&0&0&0&0&0&0\end{array}}$}
\put(6.2,28){$\of^{2,5}[1,26]$}
\put(25,28){=}
\put(30,28){${\begin{array}{*{34}{p{0.3cm}}}
0&0&0&0&0&0&0&0&0&0&0&0&0&0&0&0&0&0&0&0&0&1&2&3&4&5&6&7&0&0&0&0&0&0\end{array}}$}
\put(67,54){\normalsize{\textbf{C(6)}}}
\put(29,33){\line(1,0){87}}
\put(29,33){\line(0,1){18}}
\put(116,33){\line(0,1){26}}
\put(29,50){$\overbrace{\hspace{7.1cm}}$}
\put(116,39){\line(1,0){54}}
\put(149,45){\line(0,1){5}}
\put(117,50){$\overbrace{\hspace{2.6cm}}$}
\put(129,54){\normalsize{\textbf{C(4)}}}
\put(170,39){\line(0,1){20}}
\put(116,45){\line(1,0){33}}
\put(117,58){$\overbrace{\hspace{4.3cm}}$}
\put(139,62){\normalsize{\textbf{C(5)}}}
\put(117,28){$\underbrace{\hspace{4.3cm}}$}
\normalsize
\put(139,20){$\mathbf{R_b^{3,5}}$}
\end{picture}
\vspace{-1.8cm}
\caption{The relation among functions $C(m)$ for $m=4,5,6,7$.\label{Fig:14}}
\end{figure}
\end{proof}

\noindent\textbf{Remark.} Since $\lim_{m\rightarrow\infty}\frac{C(m)}{f_m}=1$, 
almost all positions in $\F$ start some cubes.
Let us consider factor spectrum
$\mathcal{S}_{\PP_5}=\{(\w,j)\mid \F[j;3|\w|]=\w\w\w\}$.
The asymptotic solution is equivalent to that for almost all $j\geq1$ there exists a factor $\w$ such that $(\w,j)\in\mathcal{S}_{\PP_5}$.


\bigskip

\noindent\emph{Acknowledgements.}
The research is supported by the Grant NSFC No.11701024 and No.11431007.

\end{document}